
\documentclass[a4paper,11pt]{amsart}
\usepackage{amssymb}
\usepackage{amsmath}
\usepackage{amscd}
\usepackage[all]{xy}


\usepackage{comment}

\usepackage{verbatim}
\usepackage{url}

\DeclareSymbolFont{rsfs}{U}{rsfs}{m}{n}
\DeclareSymbolFontAlphabet{\mathcal}{rsfs}

\DeclareFontEncoding{OT2}{}{} 
\DeclareTextFontCommand{\textcyr}{\fontencoding{OT2}
    \fontfamily{wncyr}\fontseries{m}\fontshape{n}\selectfont}

\newcommand{\Ker}{\operatorname{ker}}



\theoremstyle{plain}

\newtheorem{theorem}{Theorem}[section]
\newtheorem{maintheorem}[theorem]{Main Theorem}
\newtheorem{proposition}[theorem]{Proposition}
\newtheorem{lemma}[theorem]{Lemma}
\newtheorem{corollary}[theorem]{Corollary}

\theoremstyle{definition}
\newtheorem{subsec}[theorem]{}
\newtheorem{remark}[theorem]{Remark}

\newtheorem{definition}[theorem]{Definition}
\newtheorem{example}[theorem]{Example}


\newcommand{\Zz}{{\mathbb{Z}}}

\newcommand{\Rr}{{\mathbb{R}}}

\newcommand{\Cc}{{\mathbb{C}}}
\newcommand{\Nn}{{\mathbb{N}}}

\def\Z{\Zz}

\def\R{\Rr}
\def\C{\Cc}
\def\N{\Nn}

\newcommand{\GL}{{\operatorname{GL}}}
\newcommand{\SL}{{\operatorname{SL}}}
\newcommand{\Sp}{{\operatorname{Sp}}}
\newcommand{\PSp}{{\operatorname{PSp}}}

\newcommand{\PGL}{{\operatorname{PGL}}}
\newcommand{\SU}{{\rm SU}}
\newcommand{\PSU}{{\rm PSU}}

\newcommand{\Nm}{{\rm N}}

\newcommand{\Gal}{{\operatorname{Gal}}}

\newcommand{\ad}{^{\rm ad}}
\renewcommand{\sc}{^{\rm sc}}
\newcommand{\sss}{^{\rm ss}}

\newcommand{\X}{{\mathbb{X}}}

\newcommand{\G}{{\mathbb G}}
\newcommand{\Gm}{{\G_m}}

\renewcommand{\ggg}{{\mathfrak{g}}}

\newcommand{\reg}{{\rm reg}}

\newcommand{\ob}{\operatorname{ob}}

\newcommand{\Rad}{\operatorname{R}}
\newcommand{\isoto}{\overset{\sim}{\to}}

\newcommand{\into}{\hookrightarrow}
\newcommand{\onto}{\twoheadrightarrow}

\newcommand{\labelto}[1]{\xrightarrow{\makebox[1.5em]{\scriptsize ${#1}$}}}

\newcommand{\Ext}{{\rm Ext}}

\newcommand{\kbar}{{\overline{k}}}

\def\Kbar{K_s}

\def\charact{\operatorname{char}}

\title[Toric-friendly  groups]{ Toric-friendly  groups}
\author{Mikhail Borovoi}
\address{Borovoi: Raymond and Beverly Sackler School of Mathematical Sciences,
Tel Aviv University, 69978 Tel Aviv, Israel}
\email{borovoi@post.tau.ac.il}
\thanks{Borovoi was partially supported
by the Hermann Minkowski Center for Geometry}
\author{Zinovy Reichstein}
\address{Reichstein: Department of Mathematics, The University of British
Columbia, 1984 Mathematics Road, Vancouver, B.C., Canada V6T 1Z2}
\email{reichst@math.ubc.ca}
\thanks{Reichstein was partially supported by NSERC Discovery
and Accelerator Supplement grants}
\subjclass[2000]{Primary: 20G15; Secondary: 14G05, 20G10}
\keywords{Toric-friendly groups,   linear algebraic groups, semisimple groups, maximal tori,  rational points,
 elementary obstruction}

\newcommand{\bbG}{{\mathbb{G}}}

\def\Z{\Zz}
\def\bbZ{\Zz}

\def\R{\Rr}
\def\C{\Cc}
\def\N{\Nn}
\def\Ru{{R_{\rm u}}}
\def\simeq{\cong}

\setcounter{section}{-1}

\begin{document}

\begin{abstract}
Let $G$ be a connected linear algebraic group over a field $k$.
We say that $G$ is  toric-friendly  if for any field extension
$K/k$ and any maximal $K$-torus $T$ in $G$
the group $G(K)$ acts transitively on $(G/T)(K)$.
Our main result is a classification of semisimple
(and under certain assumptions on $k$, of connected)
 toric-friendly  groups.
\end{abstract}

\maketitle

\section{Introduction}
Let $k$ be a field
and $X$ be a homogeneous space of a connected linear algebraic group $G$
defined over $k$.
The first question one usually asks about $X$
is whether or not it has a $k$-point.
If the answer is ``yes", then one
often wants to know whether or not the set $X(k)$ of $k$-points of $X$
forms a single orbit under the group $G(k)$.

In this paper we shall focus on the case where
the geometric stabilizers for the $G$-action on $X$ are maximal
tori of $G_\kbar:=G\times_k\kbar$
(here $\kbar$ stands for a fixed algebraic closure of $k$).
Such homogeneous spaces arise, in particular, in the study
of the adjoint action of a connected reductive group $G$ on
its Lie algebra or of the conjugation action of $G$ on
itself, see~\cite{ckpr}.
It is shown in~\cite[Corollary 4.6]{ckpr}
(see also \cite[Lemma 2.1]{Kottwitz})
that every homogeneous space $X$ of this type has a $k$-point,
assuming that $G$ is split and $\charact(k)=0$.
Therefore it is natural to ask if this point is unique
up to translations by $G(k)$.

\begin{definition}\label{def:toric-friendly}
Let $k$ be a field.
We say that a connected linear $k$-group $G$ is \emph{toric-friendly}
if for every field extension $K/k$ the following condition is satisfied:

\setlength{\hangindent}{30pt}
(*) For every maximal $K$-torus $T$ of $G_K:=G\times_k K$
the group $G(K)$ has only one orbit in $(G_K/T)(K)$, or, equivalently,
the natural map $\pi\colon G(K) \to (G_K/T)(K)$ is surjective.
\end{definition}

Examining the cohomology exact sequence associated to
the $K$-subgroup $T$ of $G_K$
(cf. \cite[I.5.4, Proposition 36]{Serre-CG}),
we see that
$G$ is toric-friendly if and only if $\ker[H^1(K,T)\to H^1(K,G)]=1$
for every field extension  $K/k$ and every maximal $K$-torus $T$ of $G_K$.
\medskip

Observe that $G$ is toric-friendly if and only if 
condition (*) of Definition \ref{def:toric-friendly}
is satisfied for all \emph{finitely generated} extensions $K/k$.
\medskip

We are interested in classifying   toric-friendly  groups.
In Section~\ref{sect.prel} we partially reduce this problem
to the case where the group is semisimple. The rest of this paper
will be devoted to proving the following classification theorem for
semisimple   toric-friendly  groups.

\begin{maintheorem} \label{thm:non-split-intro}
Let $k$ be a field.
A connected semisimple $k$-group $G$
is  toric-friendly  if and only if $G$ is isomorphic to a direct product
$\prod_i R_{F_i/k}G'_i$, where each $F_i$ is a finite separable extension of $k$
and each $G'_i$ is an inner form of $\PGL_{n_i,F_i}$
for some integer $n_i$.
\end{maintheorem}

\bigskip
\noindent\emph{Acknowledgements.}
We are grateful to Jean-Louis Colliot-Th\'el\`ene, an editor of ANT,
for helpful comments and suggestions. 
In particular, he contributed Lemma~\ref{lem:ElemObstr}
and the idea of Proposition \ref{prop.nontrivial}, which
simplified our earlier arguments.  We  thank the anonymous referee
for a quick and thorough review and an anonymous editor of ANT for helpful comments.
We also thank Brian Conrad, Philippe Gille,
Boris Kunyavski\u\i\, and James S.~Milne for stimulating
discussions.

\tableofcontents

\noindent\emph{Notation.}

Unless otherwise specified, $k$ will denote an arbitrary field.
For any field $K$ we denote by $\Kbar$  a separable closure of $K$.

By a $k$-group we mean an affine algebraic group scheme over $k$,
not necessarily smooth or connected.
However, by a reductive $k$-group (resp.~a semisimple $k$-group)
we mean a smooth, connected, reductive $k$-group
(resp.~a smooth, connected, semisimple $k$-group).

Let $S$ be a $k$-group.
We  denote by $H^i(k,S)$ the $i$-th flat cohomology
set for $i=0,1$, cf.~\cite[17.6]{Waterhouse}.
If $S$ is abelian, we  denote by $H^i(k,S)$ the $i$-th flat cohomology
group for $i\ge 0$, cf.~\cite[Appendix B]{BFT}.
There are exact sequences for flat cohomology
similar to those for Galois cohomology,
see~\cite[18.1]{Waterhouse} and~\cite[Appendix B]{BFT}.
When $S$ is smooth, the flat cohomology
$H^i(k,S)$ can be identified with Galois cohomology.

\section{First reductions}
\label{sect.prel}

\begin{lemma} \label{lem:UGG'}
 Let
$
1\to U\to G\labelto{\varphi} G' \to 1
$
be an exact sequence of smooth connected $k$-groups, where $U$ is unipotent.
We assume that $U$ is $k$-split, i.e. has a composition series over $k$
whose successive quotients are isomorphic to $\G_{{\rm a},k}$.
Then $G$ is toric-friendly if and only if $G'$ is toric-friendly.
\end{lemma}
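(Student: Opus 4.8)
The plan is to use $\varphi$ to match up maximal $K$-tori of $G_K$ with those of $G'_K$, and then to transport condition (*) of Definition~\ref{def:toric-friendly} across this matching; the cohomological input will be that $U$ being $k$-split forces $H^1(K,U)=1$ for every field extension $K/k$.

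First I would sort out the tori. Fix a field extension $K/k$. If $T$ is a maximal $K$-torus of $G_K$, then $\ker(\varphi_K|_T)=T\cap U_K$ is a subgroup scheme both of a torus and of a unipotent group, hence of multiplicative type and unipotent, hence trivial; so $\varphi_K|_T$ is a monomorphism, i.e.\ a closed immersion. Since a unipotent group contains no nontrivial torus, $\varphi$ carries a maximal torus of $G_{\kbar}$ isomorphically onto a maximal torus of $G'_{\kbar}$, so $\rank G=\rank G'$; hence $\dim\varphi(T)=\dim T=\rank G=\rank G'$ and therefore $\varphi_K|_T\colon T\isoto\varphi(T)$ is an isomorphism onto a maximal $K$-torus of $G'_K$. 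Conversely, given a maximal $K$-torus $T'$ of $G'_K$, I would look at $H:=\varphi_K^{-1}(T')$, a smooth connected solvable $K$-group fitting into $1\to U_K\to H\to T'\to1$, with $\Ru(H)=U_K$ (any larger connected normal unipotent subgroup would have image in $T'$ of multiplicative type and unipotent, hence trivial) and so $H/\Ru(H)\cong T'$; by Grothendieck's theorem on the existence of maximal tori over the ground field, $H$ contains a maximal $K$-torus $T$, which then has $\dim T=\dim T'$, so $\varphi_K|_T\colon T\isoto T'$ as before, and $T$ is maximal in $G_K$ because any $K$-torus of $G_K$ containing $T$ maps into $T'$, hence by maximality onto $T'$, hence lies in $H$.

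Next comes the cohomology. Fix $K/k$ and a maximal $K$-torus $T$ of $G_K$, with image $T'=\varphi(T)$. Since $U$ is $k$-split, $U_K$ is $K$-split, so a d\'evissage along its composition series, using $H^1(K,\G_{{\rm a},K})=0$, gives $H^1(K,U_K)=1$; then exactness of $H^1(K,U_K)\to H^1(K,G_K)\to H^1(K,G'_K)$ shows $\ker[H^1(K,G)\to H^1(K,G')]=1$. On the other hand the isomorphism $\varphi_K|_T$ induces a bijection $H^1(K,T)\isoto H^1(K,T')$ sitting in a commutative square with $H^1(K,G)\to H^1(K,G')$ and the maps induced by $T\into G$ and $T'\into G'$. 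A short diagram chase then gives $\ker[H^1(K,T)\to H^1(K,G)]=1$ if and only if $\ker[H^1(K,T')\to H^1(K,G')]=1$: the implication passing from the $G'$-side to the $G$-side uses only the bijection on $H^1$ of the tori, while the implication passing from the $G$-side to the $G'$-side uses in addition that $\ker[H^1(K,G)\to H^1(K,G')]=1$. Combining this with the matching of tori finishes the proof: if $G$ is toric-friendly, then for every $K$ and every maximal $K$-torus $T'$ of $G'_K$, writing $T'=\varphi(T)$, the kernel condition holds for $(K,T)$, hence for $(K,T')$, so $G'$ is toric-friendly; conversely, if $G'$ is toric-friendly, then for every $K$ and every maximal $K$-torus $T$ of $G_K$, the kernel condition holds for $(K,\varphi(T))$, hence for $(K,T)$, so $G$ is toric-friendly.

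I expect the main obstacle to be the surjectivity in the matching of tori — lifting a maximal $K$-torus of $G'_K$ back to a maximal $K$-torus of $G_K$ — which is exactly where Grothendieck's existence theorem is needed; the rest is routine. It is also worth emphasizing that the hypothesis that $U$ is $k$-\emph{split}, rather than merely unipotent, is what makes the d\'evissage valid over \emph{every} extension $K/k$: over an imperfect field a non-split smooth connected unipotent group can carry nontrivial torsors, so $H^1(K,U)=1$ could fail without it.
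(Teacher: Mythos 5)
Your proof is correct and follows essentially the same route as the paper: the same matching of maximal $K$-tori via $\varphi$ (isomorphism onto the image one way, passing to a maximal torus of $H=\varphi^{-1}(T')$ the other way) and the same commutative square relating $H^1(K,T)\to H^1(K,G)$ to $H^1(K,T')\to H^1(K,G')$. The only difference is that where the paper invokes Sansuc's Lemme 1.13 for the bijectivity of $H^1(K,G)\to H^1(K,G')$, you derive the weaker but sufficient fact that this map has trivial kernel from $H^1(K,U)=1$ by d\'evissage along the splitting filtration, which makes that step self-contained.
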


\begin{proof}
Choose a field extension $K/k$ and a maximal $K$-torus $T\subset G_K$.
Set $T'=\varphi(T)\subset G'_K$, then $T'$ is a maximal torus of $G'_K$.
The  map $\varphi^T\colon T\to T'$ is an isomorphism,
because $T \cap U_K = 1$ (as $U_K$ is unipotent).
Conversely, let us start from a maximal torus $T'$  of $G'_K$.
Let $H=\varphi^{-1}(T')\subset G_K$ be the preimage of $T'$, then $H$ is smooth and connected, so
any maximal torus $T$ of $H$ maps isomorphically onto $T'$ and therefore it is maximal in  $G_K$.

Now we have  a commutative diagram
\begin{equation*}
\xymatrix{
H^1(K,T)  \ar@{->}[r] \ar[d]_{\varphi^T_*} & H^1(K,G)   \ar@{->}[d]^{\varphi_*}\\
H^1(K,T') \ar@{->}[r] & H^1(K,G')
}
\end{equation*}
Since  $\varphi^T\colon T\to T'$  is an isomorphism of tori,
the left vertical arrow $\varphi^T_*$ is an isomorphism of abelian groups.
On the other hand, by \cite[Lemme 1.13]{Sansuc} the right vertical arrow $\varphi_*$  is a bijective map.
We see that the the top horizontal arrow in the  diagram is injective 
if and only if the bottom  horizontal arrow  is injective,
which proves the lemma.
\end{proof}

Let $k$ be a perfect field and $G$ be a connected $k$-group.
Recall that over a perfect field  the unipotent radical of $G$ makes sense,
i.e., the "geometric" unipotent radical over an algebraic closure is defined over $k$, by Galois descent.
We denote the unipotent radical of $G$ by $\Ru(G)$.

\begin{corollary}\label{prop:non-reductive}
Let $k$ be a perfect field, $G$ be a  connected
$k$-group, and $\Ru(G)$ be its unipotent radical.
Then $G$ is  toric-friendly  if and only if the associated
reductive $k$-group $G/\Ru(G)$ is  toric-friendly.
\end{corollary}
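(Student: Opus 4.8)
The plan is to deduce the corollary directly from Lemma~\ref{lem:UGG'}. Since $k$ is perfect, the unipotent radical $\Ru(G)$ is a smooth connected normal $k$-subgroup of $G$ (it is the Galois descent of $R_{\rm u}(G_\kbar)$, as recalled just above), and the quotient $G/\Ru(G)$ is by definition the associated reductive $k$-group. Thus we have a short exact sequence of smooth connected $k$-groups
\[
1\to \Ru(G)\to G\labelto{\varphi} G/\Ru(G)\to 1,
\]
and Lemma~\ref{lem:UGG'} applies \emph{provided} that $U:=\Ru(G)$ is $k$-split, i.e.\ admits a composition series over $k$ whose successive quotients are isomorphic to $\G_{{\rm a},k}$. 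Granting this, the lemma yields at once that $G$ is toric-friendly if and only if $G/\Ru(G)$ is toric-friendly, which is the assertion.

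So the one input to supply is the $k$-splitness of $\Ru(G)$, and this is where perfectness of $k$ is used a second time: over a perfect field every smooth connected unipotent $k$-group is $k$-split. In characteristic $0$ this is elementary (a connected unipotent group is there successively an extension of copies of $\G_{{\rm a}}$, e.g.\ via the Lie algebra exponential); in characteristic $p$ it is a standard part of the structure theory of unipotent groups over perfect fields, for which one can cite the textbook treatments of Borel or Springer on linear algebraic groups. I expect no real obstacle here beyond locating and quoting this structure-theoretic fact precisely.

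The only point worth flagging is that the perfectness hypothesis is genuinely needed, and in two places: over an imperfect field $\Ru(G)$ need not be defined over $k$ at all, and even when it is, it may be a nontrivial \emph{wound} (non-split) unipotent $k$-group, in which case Lemma~\ref{lem:UGG'} no longer applies. Extending the reduction beyond the perfect case would therefore require replacing $\Ru(G)$ by the largest \emph{split} unipotent normal $k$-subgroup and working with the resulting pseudo-reductive quotient; this is outside the scope of the present argument.
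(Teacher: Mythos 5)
Your proof is correct and follows exactly the paper's own argument: apply Lemma~\ref{lem:UGG'} to the sequence $1\to \Ru(G)\to G\to G/\Ru(G)\to 1$, using that over a perfect field the smooth connected unipotent group $\Ru(G)$ is $k$-split (the paper cites Borel, Theorem 15.4, for this). Your additional remarks on why perfectness is needed twice are accurate but not part of the paper's proof.
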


\begin{proof}
 Since $k$ is perfect, the smooth connected unipotent $k$-group $\Ru(G)$ is $k$-split,
cf. \cite[Theorem 15.4]{Borel}, and the corollary follows from Lemma \ref{lem:UGG'}.
\end{proof}

Let $k$ be a field.
We recall that a  $k$-group $G$ is called
{\em special} if $H^1(K, G) = 1 $ for every field extension $K/k$.
This notion was introduced by J.-P.~Serre in~\cite{serre}.
Semisimple special groups over an algebraically closed field
were classified by A.~Grothendieck~\cite{Grothendieck};
we shall use his classification later on.

Recall that a $k$-torus $T$ is called quasi-trivial,
if its character group $\X(T)$ is a permutation Galois module.
Split tori and, more general, quasi-trivial tori are special.

\begin{proposition} \label{prop:CGG'}
Let
$
1\to C\to G\labelto{\varphi} G' \to 1
$
be an exact sequence of $k$-groups, where
$G$ and $G'$ are  reductive,
and $C \subset G$ is central, hence
of multiplicative type (not necessarily
connected or smooth).

\smallskip
(a) If $G$ is  toric-friendly  then so is $G'$.

\smallskip
(b) If $C$ is a special $k$-torus
then $G$ is  toric-friendly  if and only if $G'$ is  toric-friendly.
\end{proposition}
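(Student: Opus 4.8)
The plan is to argue via the cohomology-theoretic reformulation of toric-friendliness noted in the introduction: $G$ is toric-friendly if and only if $\ker[H^1(K,T)\to H^1(K,G)]=1$ for every field extension $K/k$ and every maximal $K$-torus $T$ of $G_K$. So fix $K/k$. First I would set up the correspondence between maximal tori of $G_K$ and of $G'_K$: since $C$ is central, a maximal torus $T\subset G_K$ is the preimage under $\varphi$ of the maximal torus $T'=\varphi(T)\subset G'_K$ (up to the central subgroup $C$), and conversely every maximal torus $T'$ of $G'_K$ arises this way, with $T=\varphi^{-1}(T')$. Thus we have exact sequences $1\to C\to T\to T'\to 1$ and $1\to C\to G_K\to G'_K\to 1$, compatible in the obvious way, giving a commutative ladder of cohomology sequences
\begin{equation*}
\xymatrix{
H^1(K,T) \ar[r] \ar[d] & H^1(K,G_K) \ar[r] \ar[d] & H^2(K,C) \ar@{=}[d] \\
H^1(K,T') \ar[r] & H^1(K,G'_K) \ar[r] & H^2(K,C)
}
\end{equation*}
(flat cohomology, to accommodate non-smooth $C$), where the rows are the connecting-map sequences and the rightmost square commutes because both boundary maps are induced by the same extension class of $C$.

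For part (a): assume $G$ is toric-friendly. Let $\xi'\in\ker[H^1(K,T')\to H^1(K,G'_K)]$. I want to lift $\xi'$ to a class in $H^1(K,T)$ killed in $H^1(K,G_K)$. The image of $\xi'$ in $H^2(K,C)$ (via $H^1(K,T')\to H^1(K,G'_K)\to H^2(K,C)$, which is zero since $\xi'$ dies in $H^1(K,G'_K)$) is trivial, so by exactness of the top row $\xi'$ lifts to some $\xi\in H^1(K,T)$; then the image of $\xi$ in $H^1(K,G_K)$ maps to $0$ in $H^1(K,G'_K)$, hence (by exactness and the fact that the connecting map $H^0(K,G'_K)\to H^1(K,C)\to H^1(K,G_K)$ — i.e. twisting — controls the fiber) one can modify $\xi$ by a class coming from $H^1(K,C)$, which in turn lifts to $H^1(K,T)$ since $C\subset T$, so as to arrange that $\xi$ itself dies in $H^1(K,G_K)$. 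By toric-friendliness of $G$, $\xi=1$, hence $\xi'=1$. The one subtlety here is that the fibers of $H^1(K,G_K)\to H^1(K,G'_K)$ are not simply images of $H^1(K,C)$ but orbits under a twisted group action; I would handle this by the standard twisting argument (replace $T,G_K$ by their $\xi$-twists, which are again a maximal torus in a reductive group since $C$ is central, so the twist of $G_K$ is an inner form) rather than by naive diagram-chasing — this is the step that needs the most care.

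For part (b): the ``only if'' direction is part (a), so assume $G'$ is toric-friendly and $C$ is a special $k$-torus; I must show $G$ is toric-friendly. Now $C$ is a torus, so we use Galois cohomology and $H^1(K,C)=1$ for all $K/k$ by specialness. Given $\xi\in\ker[H^1(K,T)\to H^1(K,G_K)]$, push to $\xi'\in H^1(K,T')$; then $\xi'$ maps to the image of $\xi$ in $H^1(K,G'_K)$, which is trivial, so $\xi'\in\ker[H^1(K,T')\to H^1(K,G'_K)]=1$ by toric-friendliness of $G'$. Exactness of $H^1(K,C)\to H^1(K,T)\to H^1(K,T')$ with $H^1(K,C)=1$ then forces $\xi=1$. (Strictly, one should note that the kernel of $H^1(K,T)\to H^1(K,T')$ is the \emph{image} of $H^1(K,C)$, which is trivial; there is no twisting issue here because $T$ is abelian.) I expect part (a)'s twisting argument to be the main obstacle; part (b) is essentially a two-line diagram chase once specialness is invoked.
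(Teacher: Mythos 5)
Your argument is correct, but it takes a genuinely different and heavier route than the paper's. The paper never passes to cohomology: since $C\subset T=\varphi^{-1}(T')$, the map $\varphi$ induces an isomorphism of $K$-varieties $G_K/T\cong G'_K/T'$, hence a bijection on $K$-points, and both parts drop out of the commutative square comparing $\pi\colon G(K)\to (G_K/T)(K)$ with $\pi'\colon G'(K)\to (G'_K/T')(K)$: for (a), surjectivity of $\pi$ forces surjectivity of $\pi'$; for (b), specialness of $C$ gives surjectivity of $G(K)\to G'(K)$, which transports surjectivity of $\pi'$ back to $\pi$. This bypasses entirely the one delicate step in your part (a). That step does close, and in fact more easily than you fear: you only need the fibre of $H^1(K,G)\to H^1(K,G')$ over the \emph{base point}, which by exactness of the sequence of pointed sets is exactly the image of $H^1(K,C)$; since $C$ is central, $H^1(K,C)$ acts by translation on $H^1(K,T)$ and on $H^1(K,G)$ compatibly with the map $H^1(K,T)\to H^1(K,G)$, so correcting your lift $\xi$ by the image in $H^1(K,T)$ of the inverse of the relevant class of $H^1(K,C)$ kills its image in $H^1(K,G)$ without changing $\xi'$ --- no actual twisting of $G$ is required. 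Do note that because $C$ need not be smooth, all of this must be carried out in flat cohomology (as you anticipate), where the needed exactness statements and the central $H^1(K,C)$-action still hold; this is one reason the paper's point-counting argument is preferable. Your part (b) is a correct short diagram chase and is precisely the cohomological shadow of the paper's use of the surjectivity of $G(K)\to G'(K)$.
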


\begin{proof}
Let $K/k$ be a field extension.
The map $T\mapsto T':=\varphi(T)$
is a bijection between the set of maximal $K$-tori $T\subset G_K$
and the set of maximal $K$-tori $T'\subset G'_K$
(the inverse map is $T'\mapsto T:=\varphi^{-1}(T')$).
For such $T$ and $T'=\varphi(T)$
we have  commutative diagrams
\begin{equation*}\label{eq:diagram-CGG'}
\xymatrix{
G_K   \ar@{->}[r]^{\varphi} \ar[d]_{\pi} & G_K'   \ar@{->}[d]^{\pi'}
&G(K )  \ar@{->}[r]^{\varphi} \ar[d]_{\pi} & G'(K)   \ar@{->}[d]^{\pi'}\\
G_K/T \ar@{->}[r]^{\varphi_*}_{\simeq} & G_K'/T'
&(G_K/T)(K) \ar@{->}[r]^{\varphi_*}_{\simeq} & (G_K'/T')(K)
}
\end{equation*}
where $\varphi_*\colon G_K/T\isoto G'_K/T'$
is an isomorphism of $K$-varieties,
and the induced map
on $K$-points $\varphi_*\colon (G_K/T)(K)\to (G'_K/T')(K)$
is a bijection.
Now, if $G$ is  toric-friendly, then the map
$\pi\colon G(K)\to(G_K/T)(K)$ is surjective,
and we see from the right-hand diagram that then the map
$\pi'\colon G'(K)\to(G'_K/T')(K)$ is surjective as well.
This shows that $G'$ is  toric-friendly, thus proving (a).

To prove (b), assume that $G'$ is  toric-friendly  and $C$ is a special $k$-torus.
Then the map $\pi'\colon G'(K)\to(G'_K/T')(K)$ is surjective
(because $G'$ is  toric-friendly)
and the map $\varphi\colon G(K)\to G'(K)$ is surjective
(because $C$ is special).
We see from the right-hand diagram
that the map $\pi\colon G(K)\to(G_K/T)(K)$ is surjective as well.
Hence $G$ is  toric-friendly.
\end{proof}

We record the following immediate corollary of Proposition~\ref{prop:CGG'}(b).

\begin{corollary} \label{cor.modR(G)}
Let $G$ be a reductive $k$-group.
Suppose that the radical $\Rad(G)$ is a special $k$-torus (in particular,
this condition is satisfied if $\Rad(G)$ is a quasi-trivial $k$-torus).
Then $G$ is  toric-friendly  if and only if the semisimple group $G/\Rad(G)$ is  toric-friendly.
\qed
\end{corollary}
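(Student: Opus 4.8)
The plan is to obtain this as a direct application of Proposition~\ref{prop:CGG'}(b). First I would write down the exact sequence of reductive $k$-groups
\begin{equation*}
1 \to \Rad(G) \to G \to G/\Rad(G) \to 1 ,
\end{equation*}
and check that it meets the standing hypotheses of that proposition: the radical $\Rad(G)$ is the maximal central torus of the reductive group $G$, so $C := \Rad(G)$ is a central subgroup of $G$; it is a $k$-torus; and the quotient $G/\Rad(G)$ is semisimple, in particular reductive.

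Next, by the hypothesis of the corollary, $\Rad(G)$ is a special $k$-torus, so part (b) of Proposition~\ref{prop:CGG'} applies verbatim with $C = \Rad(G)$ and $G' = G/\Rad(G)$, and yields exactly the asserted equivalence: $G$ is  toric-friendly  if and only if $G/\Rad(G)$ is  toric-friendly. For the parenthetical statement, the only extra input is the remark preceding Proposition~\ref{prop:CGG'} that quasi-trivial tori are special; hence when $\Rad(G)$ is quasi-trivial the hypothesis of the corollary is automatically in force.

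There is essentially no obstacle here: the corollary is a formal bookkeeping consequence of Proposition~\ref{prop:CGG'}(b). The only points requiring (entirely standard) justification are that $\Rad(G)$ is central in $G$ and that $G/\Rad(G)$ is semisimple, hence reductive, both of which belong to the basic structure theory of reductive groups; and that ``special'' is used here in the sense that $H^1(K,\Rad(G)) = 1$ for every field extension $K/k$, which is precisely the notion recalled in the text.
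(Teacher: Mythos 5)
Your proposal is correct and coincides with the paper's own (implicit) proof: the corollary is recorded there as an immediate consequence of Proposition~\ref{prop:CGG'}(b) applied to the central extension $1 \to \Rad(G) \to G \to G/\Rad(G) \to 1$, exactly as you describe.
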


The following corollary follows from Corollary \ref{prop:non-reductive}
and Corollary \ref{cor.modR(G)}.

\begin{corollary} \label{cor:non-reductive-modR(G)}
Let $k$ be a perfect field.
Let $G$ be a connected  $k$-group containing a split maximal torus.
Then $G$ is  toric-friendly  if and only if the semisimple group $G/\Rad(G)$ is  toric-friendly.
\qed
\end{corollary}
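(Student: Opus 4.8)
The plan is to chain together the two corollaries cited in the sentence preceding the statement, after first checking that the hypothesis on split maximal tori propagates correctly through the reduction steps.

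First I would pass from $G$ to its associated reductive group $G\red := G/\Ru(G)$. Since $k$ is perfect, Corollary~\ref{prop:non-reductive} tells us that $G$ is toric-friendly if and only if $G\red$ is toric-friendly. Here I would also record the standard fact that $\Ru(G)\subseteq\Rad(G)$ with $\Rad(G)/\Ru(G)=\Rad(G\red)$, so that $G/\Rad(G)\cong G\red/\Rad(G\red)$; in particular the group $G/\Rad(G)$ in the statement is semisimple, and it suffices to prove the desired equivalence with $G\red$ in place of $G$.

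Next I would verify that $G\red$ again contains a split maximal torus. Let $T$ be a split maximal torus of $G$. Since $T$ is a torus and $\Ru(G)$ is unipotent, $T\cap\Ru(G)=1$, so the projection $G\to G\red$ restricts to an isomorphism of $T$ onto its image $\overline T\subset G\red$; moreover $\overline T$ is a maximal torus of $G\red$, its rank being the reductive rank of $G$, which equals that of $G\red$. Hence $\overline T$ is a split maximal torus of $G\red$. Consequently $\Rad(G\red)$, being the connected center of the reductive group $G\red$, is contained in $\overline T$, hence is a subtorus of a split torus and therefore itself a split $k$-torus. A split torus is quasi-trivial (its character group is a trivial, hence permutation, Galois module), so $\Rad(G\red)$ is a special $k$-torus by the remark preceding Proposition~\ref{prop:CGG'}.

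Finally, applying Corollary~\ref{cor.modR(G)} to the reductive group $G\red$, whose radical $\Rad(G\red)$ is a special (indeed split) torus, we conclude that $G\red$ is toric-friendly if and only if the semisimple group $G\red/\Rad(G\red)$ is toric-friendly. Combining this with the equivalence from the first step and the identification $G\red/\Rad(G\red)\cong G/\Rad(G)$ yields the corollary. I do not anticipate any real obstacle here: the statement is essentially a bookkeeping combination of the two cited corollaries, and the only points that require a word of care are the compatibility $\Rad(G)/\Ru(G)=\Rad(G\red)$ and the fact that a split maximal torus of $G$ maps isomorphically to a (split) maximal torus of $G\red$, both of which are routine.
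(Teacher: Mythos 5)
Your proof is correct and follows exactly the route the paper intends: the paper gives no details, simply asserting that the corollary follows from Corollary~\ref{prop:non-reductive} and Corollary~\ref{cor.modR(G)}, and your argument is precisely the fleshed-out version of that chain. The points you take care over (that $\Rad(G)/\Ru(G)=\Rad(G\red)$, and that a split maximal torus of $G$ maps isomorphically to a split maximal torus of $G\red$, forcing $\Rad(G\red)$ to be split and hence special) are exactly the bookkeeping the paper leaves implicit.
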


Corollary~\ref{cor:non-reductive-modR(G)}
partially reduces the problem of classifying  toric-friendly  groups $G$
to the case where $G$ is semisimple.
The following two lemmas will be used to reduce
the problem of classifying {\em adjoint} semisimple  toric-friendly  groups $G$
to the case where $G$ is an absolutely simple adjoint $k$-group.

\begin{lemma}\label{lem:product}
A direct product $G=G'\times_k G''$ of
connected $k$-groups is  toric-friendly  if and only if
both $G'$ and $G''$ are  toric-friendly.
\end{lemma}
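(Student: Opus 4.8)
The plan is to reduce condition~(*) of Definition~\ref{def:toric-friendly} for $G=G'\times_k G''$ to the corresponding conditions for the two factors, by exploiting the fact that maximal tori, homogeneous spaces and orbit maps all decompose compatibly with a direct product. First I would record the elementary observation that for every field extension $K/k$ the maximal $K$-tori of $G_K=G'_K\times_K G''_K$ are exactly the subgroups of the form $T'\times_K T''$, where $T'$ is a maximal $K$-torus of $G'_K$ and $T''$ is a maximal $K$-torus of $G''_K$. Indeed, if $T\subseteq G_K$ is any $K$-torus, then $T$ is contained in the product of its two projections, which are $K$-subtori of $G'_K$ and of $G''_K$ respectively; enlarging these to maximal $K$-tori $S'$, $S''$ of the two factors, we get $T\subseteq S'\times_K S''$, which is again a $K$-torus of $G_K$. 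Hence, if $T$ is maximal, then $T=S'\times_K S''$, so in particular $T'=S'$, $T''=S''$ are maximal and $T=T'\times_K T''$.

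Next I would observe that for such $T=T'\times_K T''$ there is a canonical isomorphism of $K$-varieties $G_K/T\cong(G'_K/T')\times_K(G''_K/T'')$ compatible with the orbit maps. Passing to $K$-points and using that the $K$-points of a product are the product of the $K$-points, the map $\pi\colon G(K)\to(G_K/T)(K)$ becomes the product map
\[
\pi'\times\pi''\colon\ G'(K)\times G''(K)\ \longrightarrow\ (G'_K/T')(K)\times(G''_K/T'')(K).
\]
Since all four sets occurring here are nonempty (each homogeneous space contains its base point), the map $\pi'\times\pi''$ is surjective if and only if both $\pi'$ and $\pi''$ are surjective.

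The lemma will then follow in both directions. If $G'$ and $G''$ are toric-friendly, then for every $K/k$ and every maximal $K$-torus $T=T'\times_K T''$ of $G_K$ the maps $\pi'$ and $\pi''$ are surjective, hence so is $\pi$; thus $G$ is toric-friendly. Conversely, if $G$ is toric-friendly, then given $K/k$ and a maximal $K$-torus $T'$ of $G'_K$, I would pick any maximal $K$-torus $T''$ of $G''_K$; then $T'\times_K T''$ is a maximal $K$-torus of $G_K$, so $\pi'\times\pi''$ is surjective, which forces $\pi'$ to be surjective. Hence $G'$, and by symmetry $G''$, is toric-friendly. As an alternative I could argue entirely through the cohomological reformulation noted after Definition~\ref{def:toric-friendly}, using that $H^1(K,-)$ of a direct product is the product and that the kernel of a product of maps of pointed sets is the product of the kernels.

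I do not anticipate a genuine obstacle: the only point requiring any care is the bookkeeping that the maximal $K$-tori of $G_K$ are precisely the products of maximal $K$-tori of the factors (so that condition~(*) for $G$ becomes literally the conjunction of conditions~(*) for $G'$ and for $G''$), together with the compatibility of the orbit maps with the product decomposition; everything else is formal.
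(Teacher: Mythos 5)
Your proposal is correct and follows essentially the same route as the paper: decompose a maximal $K$-torus of $G_K$ as $T'\times_K T''$, identify $\pi$ with the product map $\pi'\times\pi''$ on $K$-points, and note that the product map is surjective if and only if both factors are. Your extra justifications (why every maximal torus is a product, and why surjectivity of the product forces surjectivity of each factor using nonemptiness of the targets) are details the paper leaves implicit, but the argument is the same.
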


\begin{proof}
Let $K/k$ be a field extension.
Let $T'\subset G'_K$ and  $T''\subset G''_K$ be maximal $K$-tori,
then $T:=T'\times_K T''\subset G_K$ is a maximal $K$-torus,
and every maximal $K$-torus in $G_K$ is of this form.
The commutative diagram
\[
 \xymatrix{
 G(K)  \ar[d] \ar@{=}[r]         & G'(K) \times G''(K)\ar[d]      \\
(G_K/T)(K)   \ar@{=}[r]   &  (G'_K/T')(K) \times (G''_K/T'')(K)
}
\]
shows that every $K$-point of $G_K/T$ lifts to $G$ if and
only if every $K$-point of $G'_K/T'$ lifts to $G'$ and every
$K$-point of $G''_K/T''$ lifts to $G''$.
\end{proof}

\begin{lemma}\label{lem:Weil-restriction}
Let $l/k$ be a finite separable field extension,
$G'$ a connected  $l$-group, and $G=R_{l/k}G'$.
Then $G$ is toric-friendly if and only if $G'$ is toric-friendly.
\end{lemma}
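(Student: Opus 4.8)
The plan is to combine the base-change behaviour of Weil restriction with Shapiro's lemma in order to translate the toric-friendliness condition for $G=R_{l/k}G'$ into that for $G'$. First, fix a field extension $K/k$. Since $l/k$ is finite separable, $K\otimes_k l$ is a finite \'etale $K$-algebra, hence a finite product $\prod_i L_i$ of finite separable field extensions $L_i/K$; moreover each $L_i$ is a field extension of $l$ via $l\to K\otimes_k l\to L_i$. The base-change formula for Weil restriction gives a canonical identification $G_K=\prod_i R_{L_i/K}(G'_{L_i})$, where $G'_{L_i}:=G'\times_l L_i$. Next I would describe the maximal $K$-tori of $G_K$. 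A maximal torus of a direct product is the product of maximal tori of the factors, so it suffices to treat a single factor $R_{L/K}H$ with $L/K$ finite separable and $H$ connected; I claim each of its maximal $K$-tori is $R_{L/K}T'$ for a unique maximal $L$-torus $T'$ of $H$. Indeed, over $\overline K$ one has $(R_{L/K}H)_{\overline K}=\prod_\tau H^\tau$, the product over the $K$-embeddings $\tau\colon L\hookrightarrow\overline K$ (here $H^\tau:=H\times_{L,\tau}\overline K$), on which $\Gal(\overline K/K)$ acts transitively with the stabilizer of the inclusion $\tau_0$ equal to $\Gal(\overline K/L)$. If $T$ is a maximal $K$-torus of $R_{L/K}H$, then $T_{\overline K}=\prod_\tau T_\tau$ with each $T_\tau$ maximal in $H^\tau$, and $\Gal(\overline K/K)$-stability of $T_{\overline K}$ forces $T_{\tau_0}$ to be $\Gal(\overline K/L)$-stable, hence to descend to a maximal $L$-torus $T'$ of $H=H^{\tau_0}$; transitivity then yields $T_{\overline K}=(R_{L/K}T')_{\overline K}$, so $T=R_{L/K}T'$. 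Consequently the maximal $K$-tori of $G_K$ are exactly the subgroups $T=\prod_i R_{L_i/K}(T'_i)$ with $T'_i$ a maximal $L_i$-torus of $G'_{L_i}$.

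Second, by Shapiro's lemma $H^1(K,R_{L/K}F')\cong H^1(L,F')$ for every affine $L$-group $F'$ (in the non-abelian case this is the bijection between $R_{L/K}F'$-torsors over $K$ and $F'$-torsors over $L$), naturally in $F'$, and $H^1$ of a direct product of groups is the product of the $H^1$'s. Applying naturality to the inclusions $T'_i\hookrightarrow G'_{L_i}$, we obtain for $T=\prod_i R_{L_i/K}(T'_i)$ as above
\[
\ker\bigl[H^1(K,T)\to H^1(K,G_K)\bigr]\;=\;\prod_i\ker\bigl[H^1(L_i,T'_i)\to H^1(L_i,G'_{L_i})\bigr].
\]
Thus the condition ``$\ker[H^1(K,T)\to H^1(K,G_K)]=1$'' is equivalent to ``$\ker[H^1(L_i,T'_i)\to H^1(L_i,G'_{L_i})]=1$ for all $i$''.

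It remains to match up the quantifiers. If $G'$ is toric-friendly, then for any $K/k$ and any maximal $K$-torus $T$ of $G_K$, each $L_i/l$ is a field extension and each $T'_i$ a maximal $L_i$-torus of $G'_{L_i}$, so every factor on the right-hand side above vanishes and hence $G$ is toric-friendly. Conversely, suppose $G$ is toric-friendly, and let $M/l$ be a field extension with $T'$ a maximal $M$-torus of $G'_M$. Regarding $M$ as an extension of $k$, the multiplication map $M\otimes_k l\to M$ exhibits $M$ as one of the factors $L_i$, say $L_1=M$ with $L_1/M$ the identity, so that $R_{L_1/M}(G'_{L_1})=G'_M$; complete $T'$ to a maximal $M$-torus $T=T'\times\prod_{i\ge 2}T_i$ of $G_M$ by choosing maximal $M$-tori $T_i$ in the remaining factors (which exist, since every connected linear algebraic group over a field has a maximal torus). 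Toric-friendliness of $G$, applied to the extension $M/k$ and this $T$, forces the $i=1$ factor $\ker[H^1(M,T')\to H^1(M,G'_M)]$ to vanish, and since $M/l$ and $T'$ were arbitrary, $G'$ is toric-friendly.

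The step I expect to require the most care is the identification of the maximal $K$-tori of $R_{L/K}G'$ by Galois descent, together with the verification that this identification is compatible with the Weil-restriction/Shapiro isomorphism on cohomology; once that is in place, the rest is routine bookkeeping with Shapiro's lemma and with the quantifiers in the definition of toric-friendliness.
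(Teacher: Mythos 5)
Your proof is correct and follows essentially the same route as the paper: the same decomposition $G_K=\prod_i R_{L_i/K}G'_{L_i}$ coming from $l\otimes_k K=\prod_i L_i$, the same description of the maximal $K$-tori as $\prod_i R_{L_i/K}T'_i$, and the same quantifier bookkeeping in both directions (including identifying $M$ with one of the factors $L_i$ in the converse). The only difference is cosmetic: you carry out the factorwise reduction via Shapiro's lemma on $H^1$ together with the kernel criterion for toric-friendliness, whereas the paper argues directly with the surjectivity of $G(K)\to (G_K/T)(K)$ using $(R_{L/K}X)(K)=X(L)$ --- two incarnations of the same adjunction --- and your Galois-descent classification of the maximal tori of $R_{L/K}H$, as well as the explicit completion of $T'$ to a maximal torus of $G_M$, spell out details the paper leaves implicit.
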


\begin{proof}
Let $K/k$ be a field extension.
Then $l\otimes_k K=L_1\times\dots\times L_r$, where $L_i$ are finite separable extensions of $K$.
It follows that $G_K=\prod_i R_{L_i/K}G'_{L_i}$.
Let $T\subset G_K$ be a maximal $K$-torus, then
$T=\prod_i R_{L_i/K}T'_i$,
where $T'_i$ is a maximal $L_i$-torus of $G'_{L_i}$ for each $i$.
We have
$$
G(K)=G_K(K)=(\prod_i R_{L_i/K}G'_{L_i})(K)=\prod_i G'_{L_i}(L_i)=\prod_i G'(L_i)
$$
and similarly
$
(G_K/T)(K)=\prod_i(G'_{L_i}/T'_i)(L_i),
$
yielding a commutative diagram
$$
\xymatrix{
 G(K)  \ar[d] \ar@{=}[r]         &\prod_i G'(L_i)\ar[d]      \\
(G_K/T)(K)   \ar@{=}[r]   & \prod_i (G'_{L_i}/T'_i)(L_i)
}
$$

If $G'$ is toric-friendly, then the right vertical
arrow in the diagram is surjective, hence the left
vertical arrow is surjective and $G$ is toric-friendly.

Conversely, assume that $G$ is toric-friendly.
Let $L/l$ be a field extension and $T'\subset G'_L$ a maximal $L$-torus.
Set $K :=L$ and $T:=T'$ in the above diagram.
Then we can identify $L$ with one of $L_i$
in the decomposition $l\otimes_k K=L_1\times\dots\times L_r$, say with $L_1$.
In this way we identify $G'_L$ with $G'_{L_1}$  and  $G'_L/T'$
with $G'_{L_1}/T'_1$.
Since $G$ is toric-friendly, the left vertical arrow
in the diagram is surjective,
hence the right vertical arrow is also surjective.
This means that
the map $G'(L_i)\to (G'_{L_i}/T'_i)(L_i)$ is surjective for each $i$
and in particular, for $i = 1$. Consequently,
the map $G'(L)\to (G'_{L}/T')(L)$ is surjective,
and $G'$ is toric-friendly, as desired.
\end{proof}

\section{The elementary obstruction}

\begin{subsec}\label{subsec:ElemObstr}
Let $K$ be a field and $X$ be a smooth geometrically integral $K$-variety.
Write $\ggg=\Gal(\Kbar /K)$, where $\Kbar$ is a fixed separable closure of $K$.
Recall (cf. \cite[Definition 2.2.1]{CTS}),
that the \emph{elementary obstruction}  $\ob(X)$
is the class in $\Ext^1_\ggg(\Kbar (X)^*/\Kbar^*, \Kbar^*)$
of the extension
$$
1\to \Kbar^*\to \Kbar(X)^*\to \Kbar(X)^*/\Kbar ^*\to 1.
$$
In particular, $\ob(X)=0$ if and only if this extension of $\ggg$-modules splits.
Note that if $X$ has a $K$-point, then $\ob(X)=0$, cf. \cite[Proposition 2.2.2(a)]{CTS}.
Conversely, if $Y$ is a $T$-torsor over $K$ for some $K$-torus $T$,
and $\ob(Y)=0$, then $Y$ has a $K$-point, cf. \cite[Lemma 2.1(iv)]{BCS}.
However, if $X$ is an $H$-torsor over $K$
for some simply connected semisimple $K$-group $H$,
then always $\ob(X)=0$, even when $X$ has no $K$-points, see \cite[Lemma 2.2(viii)]{BCS}.
(In \cite{BCS} we always assume that $\charact(K)=0$, but the proofs of
\cite[Lemma 2.2(viii)]{BCS} and \cite[Lemma 2.1(iv)]{BCS}
go through in arbitrary characteristic.)
\end{subsec}

The following key lemma was suggested to us by J.-L. Colliot-Th\'el\`ene.

\begin{lemma}\label{lem:ElemObstr}
Let $K$ be a field, $T$ be a $K$-torus,
$H$ be a  simply connected semisimple $K$-group,
$X$ be a $H$-torsor over $K$ and
$Y$ be a $T$-torsor over $K$.
If $Y$ has an $F$-point over the function field $F = K(X)$ of $X$,
then $Y$ has a $K$-point.
\end{lemma}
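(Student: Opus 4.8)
The plan is to exploit the "elementary obstruction" machinery recalled in \ref{subsec:ElemObstr}, using the fact that $\ob$ of an $H$-torsor vanishes for $H$ simply connected and semisimple. First I would consider the pullback $Y_F = Y \times_K F$, which is a $T_F$-torsor over $F = K(X)$. The hypothesis says $Y_F$ has an $F$-point, hence $\ob(Y_F) = 0$ by \cite[Proposition 2.2.2(a)]{CTS}. The crucial observation is that $X$ is a smooth geometrically integral $K$-variety (being an $H$-torsor over $K$ for a connected $H$), so $F = K(X)$ is a legitimate function field, and there is a natural "specialization"-type relationship between the elementary obstruction of $Y$ over $K$ and the elementary obstruction of $Y_F$ over $F$. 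In fact, since $X$ itself has $\ob(X) = 0$ by \cite[Lemma 2.2(viii)]{BCS}, one expects the functoriality of the elementary obstruction under field extensions to give a commutative diagram relating $\ob(Y) \in \Ext^1_{\Gal(\Kbar/K)}(\Kbar(Y)^*/\Kbar^*, \Kbar^*)$ and its image in $\Ext^1$ over $F$; I would want to deduce $\ob(Y) = 0$ from $\ob(Y_F) = 0$ together with $\ob(X) = 0$.

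The key point to make precise is the following: for a product-type variety, or more precisely for $Y \times_K X$, one has a compatibility $\ob(Y \times_K X)$ controlled by $\ob(Y)$ and $\ob(X)$, and the $F$-point of $Y_F$ is the same datum as a $K$-morphism (rational map) $X \dashrightarrow Y$, equivalently a $K$-point of $Y_F$, which forces $\ob$ of the "generic fiber" to vanish. The cleanest route is probably: the $F$-point of $Y_F$ yields $\ob(Y_{K(X)}) = 0$; then use that the restriction map $\Ext^1_{\Gal(\Kbar/K)}(\Kbar(Y)^*/\Kbar^*,\Kbar^*) \to \Ext^1_{\Gal(\overline{F}/F)}(\overline{F}(Y_F)^*/\overline{F}^*, \overline{F}^*)$ sends $\ob(Y)$ to $\ob(Y_F)$, and analyze its kernel. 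Because $X$ is an $H$-torsor with $\ob(X) = 0$, the function field $F$ is, from the point of view of the elementary obstruction, "as good as $K$" — the extension $K \subset F$ should not kill any nonzero class coming from a torus torsor. Here I would invoke that $\Kbar[X]^* = \Kbar^*$ (the units of an $H$-torsor over $\Kbar$, i.e. of $H_{\Kbar}$, are just the constants, since $H$ is semisimple) and that $\mathrm{Pic}(\overline{H}) = 0$ for $H$ semisimple simply connected; these two facts are exactly what make $\ob(X) = 0$ and what control the kernel of the restriction map.

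Concretely, I would assemble the argument as follows. Step 1: pull back $Y$ to $F = K(X)$; by hypothesis $Y_F(F) \neq \emptyset$, so $\ob(Y_F) = 0$. Step 2: establish the functoriality square expressing that the natural map $\Ext^1$-over-$K$ $\to$ $\Ext^1$-over-$F$ carries $\ob(Y) \mapsto \ob(Y_F)$. Step 3: show this map is injective on the class $\ob(Y)$, equivalently that $\ob(Y) = 0$, using $\ob(X) = 0$: the standard device is to use the variety $Y \times_K X$, for which $\Kbar[Y\times X]^* = \Kbar[Y]^*$ (since $\Kbar[X]^* = \Kbar^*$) and one has an exact "Rosenlicht-type" sequence for Picard groups, so that $\ob(Y\times_K X)$ in the relevant $\Ext^1$ is identified with $\ob(Y)$; but $Y \times_K X$ admits an $F$-point fibered over the generic point of $X$ precisely when $Y_F(F)\neq\emptyset$, giving $\ob(Y\times_K X) = 0$, hence $\ob(Y) = 0$. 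Step 4: conclude by \cite[Lemma 2.1(iv)]{BCS} that $\ob(Y) = 0$ implies $Y$ has a $K$-point. The main obstacle will be Step 3 — making the comparison between $\ob(Y)$ and $\ob(Y \times_K X)$ rigorous, i.e. correctly identifying the relevant $\Ext^1$ groups via the units-and-Picard exact sequences and using $\Kbar[H]^* = \Kbar^*$ and $\mathrm{Pic}(H_{\Kbar}) = 0$ for $H$ simply connected semisimple; everything else is formal.
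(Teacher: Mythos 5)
Your overall framework is the one the paper uses: $\ob(X)=0$ because $H$ is simply connected, the $F$-point of $Y_F$ is the same datum as a $K$-rational map $X\dasharrow Y$, and once $\ob(Y)=0$ is known, \cite[Lemma 2.1(iv)]{BCS} produces a $K$-point because $Y$ is a torsor under a torus. The gap is in your Step 3, which is exactly where all the content of the lemma sits. You assert that since $Y\times_K X$ ``admits an $F$-point fibered over the generic point of $X$'', one gets $\ob(Y\times_K X)=0$. This does not follow: the elementary obstruction of a $K$-variety is killed by a $K$-point, not by a point over a transcendental extension of $K$ (otherwise $\ob$ would always vanish, since every variety acquires rational points over a sufficiently large field). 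An $F$-point of $Y\times_K X$ lying over the generic point of $X$ is nothing but a rational section of the second projection $Y\times_K X\to X$, i.e.\ again the rational map $X\dasharrow Y$ you started with; deducing $\ob(Y\times_K X)=0$ from that datum together with $\ob(X)=0$ is precisely an instance of the statement you are trying to prove. The product device therefore relocates the difficulty rather than resolving it, and the Rosenlicht-type unit and Picard computations you invoke only serve to identify the relevant $\Ext^1$ groups; they do not by themselves produce the vanishing.

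The missing ingredient is Wittenberg's lemma \cite[Lemma 3.1.2]{Wittenberg}: if there is a $K$-rational map $X\dasharrow Y$ between smooth geometrically integral $K$-varieties, then $\ob(X)=0$ implies $\ob(Y)=0$. This is the single nontrivial input in the paper's proof, and it is genuinely delicate because the rational map $X\dasharrow Y$ need not be dominant (here $\dim H$ and $\dim T$ are unrelated), so one cannot simply pull back rational functions to compare the two defining extensions. If you cite that lemma, your Steps 1, 2 and 4 assemble into exactly the paper's argument; without it, or an actual proof of the transfer step, the argument is incomplete.
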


\begin{proof}
Since $H$ is simply connected, we have $\ob(X)=0$, see \ref{subsec:ElemObstr} above.
Suppose $Y$ has an $F$-point.
This means that there exist a $K$-rational map $X \dasharrow Y$.
By a lemma of O.~Wittenberg \cite[Lemma 3.1.2]{Wittenberg}, if we have a $K$-rational
map $X\dasharrow Y$ between smooth geometrically
integral $K$-varieties, then $\ob(X)=0$ implies $\ob(Y)=0$.
Since $T$ is a $K$-torus,
if  $\ob(Y)=0$
then $Y(K)\neq \emptyset$, see \ref{subsec:ElemObstr} above.
Thus in our situation $Y$ has a $K$-point, as claimed.
\end{proof}

\begin{lemma} \label{lem:again-CT}
Let $k$ be a field.
Assume that we have a commutative diagram of $k$-groups
$$
\xymatrix{
 S  \ar[d] \ar[r]    &T\ar[d]      \\
H   \ar[r]           &G
}
$$
where  $G$ is a smooth connected  $k$-group,
the vertical map $T\to G$ is the inclusion of
a maximal $k$-torus $T$ into $G$, and
$H$ is semisimple and simply connected.
If there exists a field extension $K/k$
such that the map $H^1(K, S) \to H^1(K, T)$ is non-trivial,
then $G$ is not toric-friendly.
\end{lemma}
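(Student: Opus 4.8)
The plan is to manufacture, from the given data, a single field extension $F/k$ and a single maximal $F$-torus of $G_F$ for which the kernel of $H^1(F,-)\to H^1(F,G)$ is non-trivial; by the reformulation of toric-friendliness recorded after Definition~\ref{def:toric-friendly}, this is exactly what it means for $G$ \emph{not} to be toric-friendly.

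So fix, using the hypothesis, a field extension $K/k$ and a class $\xi\in H^1(K,S)$ whose image $\eta\in H^1(K,T)$ is non-zero. Let $\zeta\in H^1(K,H)$ be the image of $\xi$ under $H^1(K,S)\to H^1(K,H)$, and let $X$ be an $H$-torsor over $K$ with class $[X]=\zeta$. Since $H$ is smooth and connected, $X$ is a smooth geometrically integral $K$-variety, so its function field $F:=K(X)$ is a field extension of $k$, and $X$ acquires an $F$-point (its generic point); hence $\zeta_F=0$ in $H^1(F,H)$. Now base-change the given commutative square to $F$ and chase it: $\xi_F\in H^1(F,S)$ maps to $\eta_F\in H^1(F,T)$ and to $\zeta_F=0\in H^1(F,H)$, so by commutativity $\eta_F$ maps to the trivial class in $H^1(F,G)$, i.e. $\eta_F\in\ker[H^1(F,T)\to H^1(F,G)]$. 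Since $T$ is a maximal $k$-torus of $G$, its base change $T_F$ is a maximal $F$-torus of $G_F$, and so the pair $(F,T_F)$ will witness that $G$ is not toric-friendly, provided we know $\eta_F\neq 0$.

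The non-vanishing of $\eta_F$ is the one genuinely non-formal point, and is precisely what Lemma~\ref{lem:ElemObstr} supplies: a priori the non-trivial torus class $\eta$ might become trivial over the function field of the $H$-torsor $X$, and ruling this out is the main obstacle. To do so, let $Y$ be a $T$-torsor over $K$ with $[Y]=\eta$. Since $\eta\neq 0$, $Y$ has no $K$-point; applying Lemma~\ref{lem:ElemObstr} with this $H$, $T$, $X$ and $Y$, we conclude that $Y$ has no $F$-point either, that is $\eta_F\neq 0$. This completes the argument.

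Two bookkeeping remarks complete the picture. First, $S$ need not be smooth, so $H^1(K,S)$ is to be read as flat cohomology; but the square stays commutative and functorial in the field for flat cohomology, and the flat $H^1$ of the smooth groups $T$, $H$, $G$ agrees with Galois cohomology, so nothing changes. Second, the facts used about torsors — that an $H$-torsor and a $T$-torsor over a field are smooth and geometrically integral, being forms of $H$ and of $T$, so that $F=K(X)$ is legitimate and the hypotheses of Lemma~\ref{lem:ElemObstr} are met — are standard and involve no calculation. I do not foresee any difficulty beyond correctly setting up the cohomology chase and invoking Lemma~\ref{lem:ElemObstr}.
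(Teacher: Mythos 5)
Your argument is correct and coincides with the paper's own proof: both pass to the function field $F=K(X)$ of an $H$-torsor $X$ representing the image of the class in $H^1(K,H)$, use the $F$-point of $X$ to kill the image in $H^1(F,G)$, and invoke Lemma~\ref{lem:ElemObstr} to show the torus class survives in $H^1(F,T)$. Nothing further is needed.
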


\begin{proof} Choose $K$ and $s \in H^1(K, S)$ such
that the image $t\in H^1(K,T)$ of $s$ in $H^1(K,T)$ is non-trivial.
Let $h\in H^1(K,H)$  be the image of $s\in H^1(K, S)$ in $H^1(K,H)$,
and let $g\in H^1(K,G)$ be the image of $t$ (and of $h$) in $H^1(K,G)$,
  as shown in the commutative diagram below:
\[
\xymatrix{
 H^1(K, S)  \ar[d] \ar[r]  &  H^1(K, T) \ar[d]       &  s \ar[r] \ar[d] & t   \ar[d] \\
 H^1(K, H)  \ar[r]         &  H^1(K, G)              &  h \ar[r]        & g          \\
 }
\]
Let $X$ be an $H$-torsor over $K$ representing $h$ and let
$F = K(X)$ be the function field of $X$.
We denote by $h_F$ the image of $h$ in $H^1(F,H)$,
and similarly we define $s_F$, $t_F$, and $g_F$.
Clearly $X$ has an $F$-point, hence $h_F = 1$ in $H^1(F, H)$
and therefore $g_F = 1$ in $H^1(F, G)$.
On the other hand, by Lemma~\ref{lem:ElemObstr}
$t_F \neq 1$. We conclude that the
kernel of the natural map $H^1(F, T) \to H^1(F, G)$ contains
$t_F \neq 1$ and hence, is non-trivial.  This implies
that $G$ is not toric-friendly.
\end{proof}

\begin{subsec}\label{subsec:T-T-sc}
Let $G$ be a reductive $k$-group.
Let $G\sss$ be the derived group of $G$ (it is semisimple),
and let $G\sc$ be the universal cover of $G\sss$
(it is semisimple and simply connected).
Consider the composed homomorphism $f\colon G\sc\onto G\sss\into G$.

Let $K/k$ be a field extension. There is a
a canonical bijective correspondence $T \leftrightarrow T\sc$
between the set of maximal $K$-tori $T\subset G_K$
and the set of maximal $K$-tori $T\sc\subset G\sc$.
Starting from  a maximal $K$-torus $T\subset G_K$,
we define a maximal $K$-torus $T\sc:=f^{-1}(T)\subset G_K\sc$.
Conversely, starting from a maximal $K$-torus  $T\sc\subset G_K\sc$,
we define a maximal $K$-torus $T:=f(T\sc)\cdot R(G)_K\subset G_K$,
where $R(G)$ is the radical of $G$.
\end{subsec}

\begin{proposition} \label{prop.nontrivial}
Let $G$ be a reductive $k$-group.
Let $G\sc$ and $f\colon G\sc\to G$ be as in \ref{subsec:T-T-sc} above.
Let $K/k$ be a field extension,
$T\subset G_K$ be a maximal $K$-torus of $G_K$,
and set $T\sc=f^{-1}(T)\subset G_K\sc$ as above.
If the natural map $H^1(K, T\sc) \to H^1(K, T)$ is non-trivial,
then $G$ is not  toric-friendly.
\end{proposition}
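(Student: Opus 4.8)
The plan is to deduce the proposition from Lemma~\ref{lem:again-CT}, applied with $K$ playing the role of the base field. First I would observe that toric-friendliness passes to field extensions: if $G$ is toric-friendly as a $k$-group, then for every field extension $L/K$ --- which is in particular a field extension of $k$ --- condition~(*) of Definition~\ref{def:toric-friendly} holds for $G_L=(G_K)_L$, so $G_K$ is toric-friendly as a $K$-group. Hence it is enough to show that, under the hypothesis of the proposition, $G_K$ fails to be toric-friendly over $K$.

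To this end I would feed into Lemma~\ref{lem:again-CT} (over the base field $K$) the commutative square of $K$-groups
$$
\xymatrix{
T\sc \ar[d] \ar[r] & T \ar[d] \\
G_K\sc \ar[r]^{f} & G_K
}
$$
in which the right vertical map is the inclusion of the maximal $K$-torus $T$ into the reductive (in particular smooth connected) $K$-group $G_K$, the bottom map is the homomorphism $f\colon G_K\sc\to G_K$ with $G_K\sc$ semisimple and simply connected, the left vertical map is the inclusion $T\sc=f^{-1}(T)\into G_K\sc$ --- which is indeed the inclusion of a maximal $K$-torus by~\ref{subsec:T-T-sc} --- and the top map is the restriction of $f$, which lands in $T$ precisely because $T\sc=f^{-1}(T)$. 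Commutativity is immediate from the definitions. Thus, with $S:=T\sc$ and $H:=G_K\sc$, this is exactly a diagram of the shape required by Lemma~\ref{lem:again-CT}.

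The hypothesis of that lemma --- that there exist a field extension of $K$ over which $H^1(-,S)\to H^1(-,T)$ is non-trivial --- holds by taking the extension to be $K$ itself, since we are assuming that $H^1(K,T\sc)\to H^1(K,T)$ is non-trivial. Lemma~\ref{lem:again-CT} then gives that $G_K$ is not toric-friendly over $K$, and by the first paragraph $G$ is not toric-friendly over $k$, as claimed. I do not expect any genuine obstacle here: the only points that require a word of justification are that $f^{-1}(T)$ is again a maximal torus (recorded in~\ref{subsec:T-T-sc}) and that Lemma~\ref{lem:again-CT} may legitimately be invoked with $K$ replacing $k$; everything else is a formal unwinding of the hypotheses.
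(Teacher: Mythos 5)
Your proof is correct and takes exactly the route the paper intends: the paper's own proof of Proposition~\ref{prop.nontrivial} is the single line ``Immediate from Lemma~\ref{lem:again-CT}'', and your write-up simply makes explicit the two routine points (base-changing the lemma to $K$ and the fact that toric-friendliness descends along $k\subset K$) that the authors leave to the reader. No gaps.
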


\begin{proof} Immediate from Lemma \ref{lem:again-CT}.
\end{proof}

\begin{proposition} \label{prop.Z}
Let $G$ be a semisimple $k$-group,
$f \colon G\sc \to G$ be the universal covering and
$C := \Ker(f)$.  Then the following conditions are equivalent:

\smallskip
(a) $G$ is  toric-friendly.

\smallskip
(b) The map $H^1(K, T\sc) \to H^1(K, T)$ is
trivial (i.e., is identically zero) for every
field extension $K/k$ and every maximal $K$-torus $T\sc$ of $G\sc$.
Here $T := f(T\sc)$.

\smallskip
(c) The map $H^1(K, C) \to H^1(K, T\sc)$ is
surjective for every
field extension $K/k$ and every maximal $K$-torus
$T\sc$ of $G\sc$.

\smallskip
(d) The connecting homomorphism
$\partial_T \colon H^1(K, T) \to H^2(K, C)$ is
injective for every
field extension $K/k$ and every maximal $K$-torus
$T$ of $G$.

\smallskip
(e) The natural map $H^1(K, T) \to H^1(K, G)$ is
injective for every
field extension $K/k$ and every maximal $K$-torus
$T$ of $G$.
\end{proposition}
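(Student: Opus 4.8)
My plan is to prove the chain of implications $(a)\Rightarrow(b)\Leftrightarrow(c)\Leftrightarrow(d)\Rightarrow(e)\Rightarrow(a)$, so that all five conditions become equivalent. Before anything else I would set up the bookkeeping. Since $f\colon G\sc\to G$ is a central isogeny, $C=\Ker(f)$ is a finite central $k$-subgroup of $G\sc$ of multiplicative type, contained in every maximal torus of $G\sc$; in particular $C\subset T\sc$, and restricting $f$ gives a short exact sequence $1\to C\to T\sc\to T\to 1$ of $k$-groups of multiplicative type, where $T=f(T\sc)$ (note that $T\sc=f^{-1}(T)$, since $C\subset T\sc$). This sequence maps to $1\to C\to G\sc\to G\to 1$ through the inclusions $T\sc\into G\sc$, $T\into G$ and the identity on $C$. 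Functoriality of the connecting maps then yields, for every field extension $K/k$, a factorization
$$\partial_T \;=\; \partial_G\circ\psi\colon\ H^1(K,T)\longrightarrow H^2(K,C),$$
where $\psi\colon H^1(K,T)\to H^1(K,G)$ and $\partial_G\colon H^1(K,G)\to H^2(K,C)$ are the natural maps of pointed sets. This identity is the one extra ingredient that makes the rest go through.

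For $(b)\Leftrightarrow(c)\Leftrightarrow(d)$ I would simply read off the long exact cohomology sequence of $1\to C\to T\sc\to T\to 1$,
$$H^1(K,C)\labelto{\alpha}H^1(K,T\sc)\labelto{\beta}H^1(K,T)\labelto{\partial_T}H^2(K,C).$$
Fixing $K$ and $T\sc$: the map $\beta$ is identically zero iff $\im\beta=0$; by exactness at $H^1(K,T\sc)$ this is equivalent to surjectivity of $\alpha$, and by exactness at $H^1(K,T)$ (where $\ker\partial_T=\im\beta$) it is equivalent to injectivity of the homomorphism $\partial_T$. Quantifying over all pairs $(K,T\sc)$ gives the three equivalences. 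The implication $(a)\Rightarrow(b)$ is then immediate from Proposition~\ref{prop.nontrivial} in contrapositive form: if $\beta$ were non-trivial for some $K$ and some maximal $K$-torus $T\sc$ of $G\sc$, then, taking $T=f(T\sc)$ (so that $T\sc=f^{-1}(T)$), Proposition~\ref{prop.nontrivial} says $G$ is not toric-friendly.

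It remains to prove $(d)\Rightarrow(e)\Rightarrow(a)$. For $(d)\Rightarrow(e)$ I would use the factorization $\partial_T=\partial_G\circ\psi$ from the set-up: if $\partial_T$ is injective for every $(K,T)$, then $\psi$ is injective for every $(K,T)$, since $\psi(t_1)=\psi(t_2)$ forces $\partial_T(t_1)=\partial_G\psi(t_1)=\partial_G\psi(t_2)=\partial_T(t_2)$, hence $t_1=t_2$. Finally $(e)\Rightarrow(a)$ is the remark recorded in the introduction: injectivity of $\psi$ for all $K$ and $T$ gives in particular $\ker[H^1(K,T)\to H^1(K,G)]=1$, which is the definition of toric-friendly. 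The only subtlety I foresee is exactly in $(d)\Rightarrow(e)$: both $\psi$ and $\partial_G$ are merely maps of pointed sets, so one cannot compare "kernels''; the point is precisely that the composite $\partial_G\circ\psi=\partial_T$ is a genuine group homomorphism, whose injectivity can be checked on the nose. (This is also why I route $(a)\Leftrightarrow(e)$ through $(d)$ rather than bridging the trivial-kernel/injectivity gap directly by a twisting argument.) Everything else reduces to functoriality of connecting maps, exactness of a four-term sequence, and the already-proved Proposition~\ref{prop.nontrivial}, so I do not expect any further obstacle.
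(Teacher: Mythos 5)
Your proposal is correct and follows essentially the same route as the paper: (a)$\Rightarrow$(b) via Proposition~\ref{prop.nontrivial}, the equivalence of (b), (c), (d) by reading off the exact cohomology sequence of $1\to C\to T\sc\to T\to 1$, (d)$\Rightarrow$(e) via the factorization $\partial_T=\partial_G\circ\psi$ coming from the map of short exact sequences, and (e)$\Rightarrow$(a) from the definition. Your explicit remark that the composite $\partial_G\circ\psi$ is a genuine group homomorphism (so that injectivity, not merely triviality of the kernel, can be deduced) is exactly the point the paper's diagram argument is exploiting.
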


\begin{proof} (a) $\Longrightarrow$ (b) by
Proposition~\ref{prop.nontrivial}.
Examining the cohomology sequence
\[ H^1(K, C) \to H^1(K, T\sc) \to H^1(K, T) \to H^2(K, C) \]
associated to the exact sequence $ 1 \to C \to T\sc \to T \to 1$
of $k$-groups, we see that (b), (c) and (d) are equivalent.

\smallskip
(d) $\Longrightarrow$ (e): The diagram
\[ \xymatrix{
1 \ar[r] & C \ar[r] \ar@{=}[d] & T\sc \ar[r] \ar@{_{(}->}[d] & T
\ar[r]  \ar@{_{(}->}[d] & 1 \\
1 \ar[r] &  C \ar[r] & G\sc \ar[r] & G  \ar[r] & 1} \]
of $K$-groups induces compatible connecting morphisms
\[ \xymatrix{
 H^1(K, T)  \ar[dd] \ar[rd]^{\partial_T}   &     \\
      & H^2(K, C)   \\
 H^1(K, G)  \ar[ru]^{\partial_G} &   } \]
Suppose $\alpha, \beta \in H^1(K, T)$ map to the same element
in $H^1(K, G)$. Then the above diagram shows that
$\partial_T(\alpha)= \partial_T(\beta)$ in $H^2(K, C)$.
Part (d) now tells us that $\alpha = \beta$.

\smallskip
(e) $\Longrightarrow$ (a) is obvious, since (a) is equivalent to the assertion that
the map $H^1(K, T) \to H^1(K, G)$ has trivial kernel for every
$K$ and $T$, see Definition~\ref{def:toric-friendly}.
\end{proof}

\begin{corollary} \label{cor.Z}
With the assumptions and notation of Proposition~\ref{prop.Z},
if $G$ is  toric-friendly  and quasi-split, then

\smallskip
(a) the map $H^1(K, G\sc) \to H^1(K, G)$ is
trivial for every $K/k$,

\smallskip
(b) the map $H^1(K, C) \to H^1(K, G\sc)$ is
surjective for every $K/k$,

\smallskip
(c) the connecting map
$\partial_G \colon H^1(K, G) \to H^2(K, C)$ has trivial
kernel for every $K/k$.
\end{corollary}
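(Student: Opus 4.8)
My plan is to establish part (a) and then deduce (b) and (c) from it by a formal argument with the cohomology sequence
\[
H^1(K,C)\to H^1(K,G\sc)\to H^1(K,G)\to H^2(K,C)
\]
attached to the central extension $1\to C\to G\sc\to G\to 1$, whose arrows I denote $i$, $f_*$ and $\partial_G$; this sequence is exact in the usual sense for pointed sets and central extensions (see \cite[18.1]{Waterhouse}). Exactness at $H^1(K,G\sc)$ identifies the image of $i$ with $f_*^{-1}(1)$, so if (a) holds this preimage is all of $H^1(K,G\sc)$, i.e.\ $i$ is surjective, which is (b). Exactness at $H^1(K,G)$ identifies $\partial_G^{-1}(1)$ with the image of $f_*$, so if (a) holds this image is $\{1\}$, i.e.\ $\partial_G$ has trivial kernel, which is (c). Hence it suffices to prove (a).

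To prove (a) I would first note that $G\sc$ (hence also $G\sc_K$ over $K$) is quasi-split: since $G$ is semisimple, $f\colon G\sc\to G$ is a central isogeny, and the preimage under $f$ of a Borel $k$-subgroup of $G$ is a Borel $k$-subgroup of $G\sc$. Fix now a field extension $K/k$ and a class $\gamma\in H^1(K,G\sc)$. Since $G\sc_K$ is a quasi-split connected reductive $K$-group, $\gamma$ is induced from a maximal $K$-torus: there are a maximal $K$-torus $T\sc\subset G\sc$ and a class $\tau\in H^1(K,T\sc)$ mapping to $\gamma$ under $H^1(K,T\sc)\to H^1(K,G\sc)$. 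Put $T=f(T\sc)$, a maximal $K$-torus of $G$. Since $G$ is toric-friendly, Proposition~\ref{prop.Z}(b) tells us that the map $H^1(K,T\sc)\to H^1(K,T)$ is trivial, so $\tau$ maps to $1$ in $H^1(K,T)$. Chasing the commutative square
\[
\xymatrix{
H^1(K,T\sc)\ar[r]\ar[d] & H^1(K,G\sc)\ar[d]^{f_*}\\
H^1(K,T)\ar[r] & H^1(K,G)
}
\]
then gives $f_*(\gamma)=1$. As $\gamma$ was arbitrary, $f_*$ is trivial, which is (a); and (b), (c) follow as in the first paragraph.

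I expect the only non-formal ingredient to be the fact used in the second paragraph: over a quasi-split connected reductive group $H$ over a field $K$, every class of $H^1(K,H)$ comes from a maximal $K$-torus, i.e.\ $H^1(K,H)=\bigcup_T\im\bigl(H^1(K,T)\to H^1(K,H)\bigr)$, the union taken over all maximal $K$-tori $T$ of $H$. This is standard, but since the paper works over an arbitrary base field one should cite (or supply) a version valid in arbitrary characteristic; it can be obtained from the structure theory of quasi-split groups, via the surjectivity of $H^1(K,N)\to H^1(K,H)$ for $N$ the normalizer of a maximal torus together with the description of the maximal $K$-tori as twists of a fixed one. Everything else in the argument is a diagram chase.
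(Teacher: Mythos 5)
Your proposal is correct and follows essentially the same route as the paper: the equivalence of (a), (b), (c) via exactness of the sequence $H^1(K,C)\to H^1(K,G\sc)\to H^1(K,G)\to H^2(K,C)$, and then (a) via Steinberg's theorem (every class in $H^1(K,G\sc)$ comes from a maximal $K$-torus for the quasi-split group $G\sc_K$) combined with Proposition~\ref{prop.Z} and the evident commutative square. The only difference is cosmetic ordering (the paper states the equivalence first and then proves (a)), plus your extra care in noting that $G\sc$ inherits quasi-splitness; the paper simply cites \cite[Theorem 1.8]{steinberg} for the torus-lifting fact.
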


\begin{proof}
Examining the cohomology sequence
\[ H^1(K, C) \to H^1(K, G\sc) \to H^1(K, G) \to H^2(K, C) \]
associated to the exact sequence $1 \to C \to G\sc \to G \to 1$,
we see that (a), (b) and (c) are equivalent.

To prove (a), recall that since $G_K$ is quasi-split,
by a theorem of Steinberg~\cite[Theorem 1.8]{steinberg}
every $x\sc \in H^1(K, G\sc)$ lies in the image of the map
$H^1(K, T\sc) \to H^1(K, G\sc)$ for some maximal $K$-torus
$T\sc$ of $G_K\sc$. Since $G$ is  toric-friendly, by Proposition~\ref{prop.Z} the map
$H^1(K, T\sc)  \to  H^1(K, T)$
is trivial.  The commutative diagram
\[ \xymatrix{
 H^1(K, T\sc)  \ar[d] \ar[r]    &  H^1(K, T) \ar[d]      \\
 H^1(K, G\sc)  \ar[r]          &  H^1(K, G) } 
\]
now shows that the image of $x\sc$ in $H^1(K,G)$ is 1.
Thus the map
$H^1(K, G\sc)  \to  H^1(K, G)$ is trivial.
\end{proof}

\begin{theorem} \label{thm.special}
Let $G$ be a split semisimple $k$-group
and $f \colon G\sc \to G$ be its universal covering map.
If $G$ is  toric-friendly  then $G\sc$ is special.
\end{theorem}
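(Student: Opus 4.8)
The plan is to prove that $H^1(K, G\sc) = 1$ for every field extension $K/k$ by squeezing $H^1(K, G\sc)$ between two triviality statements about the map $f_* \colon H^1(K, G\sc) \to H^1(K, G)$ induced by the central isogeny $f$: that it sends every class to the base point, and that it has trivial kernel. Since $G$ is split it is in particular quasi-split, so Corollary~\ref{cor.Z}(a) supplies the first statement directly: $f_*$ is trivial for every $K/k$. It then remains to show that $\ker f_* = 1$; granting this, every class of $H^1(K, G\sc)$ lies in $\ker f_* = 1$, whence $H^1(K, G\sc) = 1$, and since $K/k$ is arbitrary, $G\sc$ is special.

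To prove $\ker f_* = 1$ I would use the hypothesis that $G$ is split by passing through a split maximal torus. Fix a split maximal $k$-torus $T \subseteq G$ and set $T\sc := f^{-1}(T) \subseteq G\sc$; this is a maximal $k$-torus of $G\sc$ (cf.~\ref{subsec:T-T-sc}). The point is that $T\sc$ is again split: the isogeny $f \colon T\sc \to T$ identifies $\X(T)$ with a finite-index subgroup of $\X(T\sc)$, so $\X(T\sc) \subset \X(T) \otimes \Q$, a $\Q$-vector space on which the absolute Galois group acts trivially because $T$ is split; hence $\X(T\sc)$ is a trivial Galois module and $T\sc$ is split. (Equivalently, any $k$-torus isogenous to a split $k$-torus is split.) Therefore $H^1(K, T\sc) = 1$ for every $K/k$ by Hilbert's Theorem~90. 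Now $C := \ker f$ is central in $G\sc$, hence is contained in every maximal torus of $G\sc$, in particular in $T\sc$, so the natural map $H^1(K, C) \to H^1(K, G\sc)$ factors through $H^1(K, T\sc) = 1$ and is therefore trivial. By exactness of the cohomology sequence
$$ H^1(K, C) \to H^1(K, G\sc) \labelto{f_*} H^1(K, G) $$
of $1 \to C \to G\sc \to G \to 1$, the kernel of $f_*$ coincides with the image of $H^1(K, C) \to H^1(K, G\sc)$, which we have just seen is trivial. This finishes the argument.

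I do not expect a real obstacle: there is no serious computation, and the substantive work is already contained in Corollary~\ref{cor.Z} (resting in turn on Steinberg's theorem and Proposition~\ref{prop.Z}). The only step requiring a moment's thought is the assertion that a maximal torus $T\sc$ of $G\sc$ lying over a split maximal torus of $G$ is itself split --- this is precisely where the hypothesis ``split'' is used and cannot be relaxed to ``quasi-split'', since it is what allows the central subgroup $C$ of $G\sc$ to be swallowed by, and hence killed inside, a split torus. The other mild bit of care is the pointed-set bookkeeping, namely that $\ker f_*$ equals the image of $H^1(K, C) \to H^1(K, G\sc)$.
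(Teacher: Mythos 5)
Your proof is correct and follows essentially the same route as the paper: the paper takes a split maximal torus $T\sc\supseteq C$ of $G\sc$, notes that $H^1(K,C)\to H^1(K,G\sc)$ factors through $H^1(K,T\sc)=1$, and combines this with Corollary~\ref{cor.Z}(b) (surjectivity of that map), whereas you combine the same triviality with Corollary~\ref{cor.Z}(a) plus the exactness identification $\ker f_*=\im\bigl(H^1(K,C)\to H^1(K,G\sc)\bigr)$ --- but parts (a) and (b) of that corollary are explicitly equivalent via that very exact sequence, so the two arguments coincide. Your extra verification that $T\sc=f^{-1}(T)$ is split is a correct filling-in of a step the paper leaves implicit.
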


\begin{proof}
Let $T\sc$ be a split maximal torus of $G\sc$. Recall that $T\sc$
is special (as is any split torus).
Set $C=\ker f$,  then $C \subset T\sc$.
For any field  extension $K/k$,
the map $H^1(K, C) \to H^1(K, G\sc)$ factors through
$H^1(K, T\sc) =  1 $ and hence is trivial.
By Corollary~\ref{cor.Z}(b) this map is also surjective.
This shows that $H^1(K, G\sc) =  1 $ for every $K/k$,
that is, $G\sc$ is special.
\end{proof}

\begin{remark} \label{rem.quasisplit}
Our proof of Theorem~\ref{thm.special} goes through for any
(not necessarily split) semisimple $k$-group $G$, as long
as $G\sc$ contains a special maximal $k$-torus $T\sc$.
In particular,
Theorem~\ref{thm.special} remains valid
for any quasi-split semisimple $k$-group $G$, in view of
Lemma~\ref{lem:quasi-split-quasi-trivial} below.
This lemma is a special case of~\cite[Lemma 5.6]{cgp};
however, for the sake of completeness we supply
a short self-contained proof.
\end{remark}

\begin{lemma} \label{lem:quasi-split-quasi-trivial}
Let $G$ be a semisimple, simply connected,
quasi-split $k$-group over a field $k$.
Let $B\subset G$ be a Borel subgroup defined over $k$,
and let $T\subset B\subset G$ be a maximal $k$-torus of $G$ contained in $B$.
Then $T$ is a quasi-trivial $k$-torus.
\end{lemma}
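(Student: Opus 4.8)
The plan is to exhibit a permutation basis for the character group $\X(T)$ coming from the root system of the quasi-split group $G$. Since $G$ is simply connected, $\X(T)$ has a canonical $\Z$-basis given by the fundamental coweights $\omega_\alpha^\vee$ (equivalently, $\X(T)$ is the coweight lattice of the root datum, which for a simply connected group equals the coroot lattice's dual, spanned by the fundamental coweights indexed by the simple roots $\alpha \in \Delta$). First I would fix the Borel $B \supset T$ over $k$; this determines a set of simple roots $\Delta$, and because $B$ is defined over $k$, the absolute Galois group $\Gamma = \Gal(\Kbar/k)$ permutes $\Delta$ (it preserves the based root datum $(\X^*(T), \Delta, \X_*(T), \Delta^\vee)$ attached to the pair $T \subset B$). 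Correspondingly $\Gamma$ permutes the dual basis $\{\omega_\alpha^\vee : \alpha \in \Delta\}$ of $\X(T) = \X_*(T)$, up to checking that the Galois action really does just permute this particular basis rather than acting by a more general automorphism.

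The key step is therefore: verify that the $\Gamma$-action on $\X(T)$, restricted to the set $\{\omega_\alpha^\vee\}$, is by permutations. The point is that the $*$-action of $\Gamma$ on the based root datum is by automorphisms of the Dynkin diagram, i.e. it permutes $\Delta$; if $\sigma \in \Gamma$ sends simple root $\alpha$ to simple root $\beta = \sigma(\alpha)$, then by duality $\sigma$ sends the fundamental coweight $\omega_\alpha^\vee$ (characterized by $\langle \omega_\alpha^\vee, \gamma \rangle = \delta_{\alpha\gamma}$ for $\gamma \in \Delta$) to $\omega_\beta^\vee$. Hence $\{\omega_\alpha^\vee\}$ is a $\Gamma$-stable set on which $\Gamma$ acts by the same permutation as on $\Delta$, so it is a permutation basis of $\X(T)$, which is exactly the assertion that $T$ is quasi-trivial. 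Concretely: because $T$ sits inside $B$ defined over $k$, the action of $\Gamma$ on characters is the one that preserves the set of simple roots, so one only needs that in the simply connected case the fundamental coweights form a $\Z$-basis of the cocharacter lattice — this is the standard fact that for simply connected $G$ the cocharacter lattice is the coweight lattice.

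I expect the main obstacle to be a matter of bookkeeping rather than depth: one must be careful that the Galois action relevant to quasi-triviality is the action on the character lattice (not a twisted action), and that for a quasi-split group this action coincides with the $*$-action determined by the canonical based root datum of $(T,B)$; there are no cocycles to twist by precisely because $B$ is $k$-rational. Once that is pinned down, the identification ``simply connected $\Longrightarrow$ cocharacter lattice $=$ coweight lattice, with $\Z$-basis the fundamental coweights, permuted by diagram automorphisms'' finishes the argument. I would keep the proof short: fix $T \subset B$ over $k$, recall the based root datum and that $\Gamma$ acts on it through diagram automorphisms, note the fundamental coweights form a basis of $\X(T)$ because $G\sc$ is simply connected, and observe this basis is permuted by $\Gamma$, so $\X(T)$ is a permutation module and $T$ is quasi-trivial.
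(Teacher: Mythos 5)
Your overall strategy --- the Galois group preserves the based root datum of $(G,B,T)$ because $B$ and $T$ are defined over $k$, hence permutes the set $\Delta$ of simple roots, hence permutes a distinguished dual basis of one of the two lattices attached to $T$ --- is exactly the paper's. But the lattice identification at the heart of your argument is backwards, and the key step fails as stated. For $G$ simply connected the cocharacter lattice $\X^\vee(T)$ is the \emph{coroot} lattice $Q^\vee$, whose canonical $\Z$-basis is the set $\Pi^\vee=\{\alpha^\vee:\alpha\in\Delta\}$ of simple coroots; the fundamental coweights $\omega_\alpha^\vee$ that you propose as a basis (characterized by $\langle\omega_\alpha^\vee,\beta\rangle=\delta_{\alpha\beta}$ for $\beta\in\Delta$) span the coweight lattice $P^\vee$, which is the cocharacter lattice of the \emph{adjoint} group. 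In general $Q^\vee\subsetneq P^\vee$, and the $\omega_\alpha^\vee$ need not even be cocharacters of $T$: for $G=\SL_2$ one has $\X^\vee(T)=\Z\alpha^\vee$ while $\omega_\alpha^\vee=\tfrac12\alpha^\vee$. So the assertion ``simply connected $\Longrightarrow$ cocharacter lattice $=$ coweight lattice with basis the fundamental coweights'' is false; it is the correct statement for adjoint groups (which is essentially the variant due to Prasad mentioned in the remark following the lemma). You also silently identify $\X(T)$ (characters, the module appearing in the definition of quasi-trivial) with $\X_*(T)$ (cocharacters); this is harmless only because a permutation module is self-dual, a point worth making explicit.

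The repair is one line and recovers the paper's argument: since $T$, $G$, $B$ are defined over $k$, the Galois action on $\X^\vee(T)$ preserves the coroot system $R^\vee$ and its basis $\Pi^\vee$ determined by $B$; since $G$ is simply connected, $\Pi^\vee$ is a $\Z$-basis of $\X^\vee(T)$, so $\X^\vee(T)$ is a permutation module, and hence so is its dual $\X(T)$. Equivalently you may work on the character side, where for simply connected $G$ the lattice $\X(T)$ is the weight lattice with $\Z$-basis the fundamental \emph{weights} $\omega_\alpha$, again permuted by Galois because the permutation of $\Delta$ transports the defining duality $\langle\omega_\alpha,\beta^\vee\rangle=\delta_{\alpha\beta}$. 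Either way the permutation argument you describe goes through verbatim once the correct basis is substituted.
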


\begin{proof}
We write $\kbar$ for a fixed algebraic closure of $k$.
Let $\X^\vee(T)$ denote the group of cocharacters of $T$.
Let $R^\vee=R^\vee(G_\kbar,T_\kbar)\subset \X^\vee(T)$
denote the coroot system of $G_\kbar$ with respect to $T_\kbar$,
and let $\Pi^\vee\subset R^\vee$ denote
the basis of $R^\vee$ corresponding to $B$.
The Galois group $\Gal(k_s/k)$ acts on $\X^\vee(T)$.
Since $T$, $G$, and $B$ are defined over $k$,
the subsets $R^\vee$ and $\Pi^\vee$ of $\X^\vee(T)$
are invariant under this action.  Since $G$ is simply connected,
$\Pi^\vee$ is a $\bbZ$-basis of $\X^\vee(T)$.
Thus $\Gal(k_s/k)$ permutes the $\bbZ$-basis $\Pi^\vee$ of $\X^\vee(T)$;
in other words, $T$ is a quasi-trivial torus.
\end{proof}

\begin{remark}
 A similar assertion for  \emph{adjoint} quasi-split groups was proved by G. Prasad \cite[Proof of Lemma 2.0]{Prasad}.
\end{remark}

\section{Examples in type $A$}

Let $k$ be a field and $A$ a central simple $k$-algebra of dimension $n^2$.
We write $\GL_{1,A}$ for the $k$-group with $\GL_{1,A}(R)=(A\otimes_k R)^*$
for any unital commutative $k$-algebra $R$
(here $(\ )^*$ denotes the group of invertible elements).
The $k$-group $\GL_{1,A}$ is an inner form of $\GL_{n,k}$.

Let $K$ be a field.
Recall that an $n$-dimensional commutative \'etale $K$-algebra is a finite product
$E=\prod_i L_i$, where each $L_i$ is a finite separable field extension of $K$ and $\sum_i[L_i:K]=n$.
For such $E=\prod_i L_i$ we define a $K$-torus
$R_{E/K}\G_{m,E}:=\prod_i R_{L_i/K}\G_{m,L_i}$,  then $(R_{E/K}\G_{m,E})(K)=E^*$.
Clearly the $K$-torus $R_{E/K}\G_{m,E}$ is  quasi-trivial.

\begin{proposition}\label{lem:GL(A)}
Let $k$ be a field, and let  $A/k$ be a central simple $k$-algebra
of dimension $n^2$.

(a) The $k$-group $G=\GL_{1,A}$ is  toric-friendly.

(b) The $k$-group $\PGL_{1,A}:=\GL_{1,A}/\G_{m,k}$ is  toric-friendly.

(c) In particular, $\GL_{n,k}$ and $\PGL_{n,k}$ are  toric-friendly.
\end{proposition}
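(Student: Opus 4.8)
The plan is to establish part (a) directly by a concrete description of maximal tori in $\GL_{1,A}$, then deduce (b) from Proposition~\ref{prop:CGG'}(b) and (c) as the special case $A = M_n(k)$. For (a), fix a field extension $K/k$ and a maximal $K$-torus $T \subset G_K = \GL_{1,A_K}$, where $A_K = A \otimes_k K$. The key structural fact is that maximal tori of $\GL_{1,A_K}$ correspond to maximal commutative \'etale $K$-subalgebras $E \subset A_K$: given such an $E = \prod_i L_i$ with $\sum_i [L_i : K] = n$, the centralizer of $E$ in $\GL_{1,A_K}$ is the torus $R_{E/K}\G_{m,E}$, and every maximal $K$-torus arises this way (over $\kbar$ a maximal torus of $\GL_{n}$ is the diagonal torus, whose centralizer algebra is the diagonal subalgebra). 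So I would identify $T$ with $R_{E/K}\G_{m,E}$ for the \'etale algebra $E = \{a \in A_K : at = ta \text{ for all } t \in T\}$ generated by $T$.

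Once $T = R_{E/K}\G_{m,E}$ is identified inside $\GL_{1,A_K}$, the quotient $G_K/T$ is the variety whose $K$-points parametrize embeddings (or $A_K$-module structures) conjugate to the given inclusion $E \hookrightarrow A_K$; the map $\pi\colon G(K) \to (G_K/T)(K)$ sends $g \in \GL_{1,A}(K) = A_K^*$ to the conjugate embedding $a \mapsto g a g^{-1}$. Concretely I would argue via the cohomological reformulation from Definition~\ref{def:toric-friendly}: it suffices to show $\ker[H^1(K,T) \to H^1(K,G)] = 1$. Here $H^1(K, G) = H^1(K, \GL_{1,A_K})$, which by a twisted form of Hilbert 90 (Shapiro plus the fact that $A_K$ is central simple) is trivial, i.e. $H^1(K, \GL_{1,A}) = 1$ for any central simple algebra over any field — this is the standard statement that $\GL_{1,A}$ is special, or equivalently that all Azumaya $A_K$-modules of a fixed rank are isomorphic. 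Since $H^1(K, G) = 1$, the kernel in question is all of $H^1(K, T)$, but we need it to be trivial, so in fact the cleaner route is: the map $\pi$ is surjective because any $K$-point of $G_K/T$ corresponds to another maximal \'etale subalgebra $E' \subset A_K$ abstractly $K$-isomorphic to $E$ and giving the ``same'' $A_K$-module structure, hence by Skolem–Noether conjugate to $E$ by an element of $A_K^* = G(K)$.

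So the heart of the argument is the Skolem–Noether theorem: two $K$-algebra embeddings $\iota, \iota' \colon E \hookrightarrow A_K$ of a commutative \'etale (or more generally semisimple) $K$-algebra into the central simple $K$-algebra $A_K$ are conjugate by an inner automorphism of $A_K$. I would spell out why a $K$-point of $G_K/T$ yields precisely such a second embedding $\iota'$ with the same ``type'' as $\iota$ (this uses that a $K$-point of $G_K/T$ is a $T$-torsor trivial in $G$, equivalently a maximal torus of $G_K$ that is $G(\kbar)$-conjugate to $T$ and whose conjugating cocycle is a coboundary in $G$), and then apply Skolem–Noether to conjugate it back, producing the desired lift in $G(K)$. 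The main obstacle — really the only nontrivial point — is packaging this torsor-theoretic bookkeeping cleanly, i.e. verifying that a $K$-point of the homogeneous space $G_K/T$ really does correspond to a conjugate \'etale subalgebra rather than something weaker; once that dictionary is in place, Skolem–Noether closes the argument immediately.

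For (b), note $\PGL_{1,A} = \GL_{1,A}/\G_{m,k}$ with the central subtorus $C = \G_{m,k}$, which is split hence special; since $\GL_{1,A}$ is toric-friendly by (a), Proposition~\ref{prop:CGG'}(b) gives that $\PGL_{1,A}$ is toric-friendly. Part (c) is then the specialization $A = M_n(k)$, for which $\GL_{1,A} = \GL_{n,k}$ and $\PGL_{1,A} = \PGL_{n,k}$, so both are toric-friendly by (a) and (b) respectively.
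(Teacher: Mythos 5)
Your identification of the key structural fact matches the paper exactly: every maximal $K$-torus of $\GL_{1,A\otimes_k K}$ is of the form $R_{E/K}\G_{m,E}$ for an $n$-dimensional commutative \'etale subalgebra $E$ (its centralizer in $A\otimes_k K$), and your deductions of (b) via Proposition~\ref{prop:CGG'}(b) and of (c) by specializing to $A=M_n(k)$ are the paper's. But in (a) you walk past the punchline. Having written $T=R_{E/K}\G_{m,E}$ with $E=\prod_i L_i$, you have a \emph{quasi-trivial} torus, so $H^1(K',T)=\prod_i H^1(L'_i,\Gm)=1$ for every extension $K'$ by Shapiro's lemma and Hilbert~90; i.e., $T$ is special, and $\ker[H^1(K,T)\to H^1(K,G)]$ is trivial for the trivial reason that the source is trivial. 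That one line is the paper's entire proof of (a). Instead, after correctly observing that $H^1(K,G)=1$ makes the kernel equal to all of $H^1(K,T)$, you abandon the cohomological route rather than compute $H^1(K,T)$, and substitute a Skolem--Noether argument about conjugating maximal \'etale subalgebras.

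That substitute route can be made to work, but note two things. First, you yourself flag the unresolved step (``packaging this torsor-theoretic bookkeeping cleanly''), and it is a real step: one must check that a $K$-point of $G_K/T$ yields a $K$-embedding $E\hookrightarrow A\otimes_k K$ that is $K_s$-conjugate to the given one, and that Skolem--Noether applies to the non-simple semisimple algebra $E$ (which requires matching the induced module structures, i.e., the ranks of the idempotents of $E$). Second, and more to the point, the Skolem--Noether statement you need --- two $K_s$-conjugate embeddings of $E$ are $A_K^*$-conjugate --- is itself equivalent to the vanishing of $H^1(K, Z_{A^*}(E))=H^1(K,R_{E/K}\G_{m,E})$, i.e., precisely the Hilbert~90 statement you declined to invoke. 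So the detour buys nothing and leaves a gap where the direct argument has none. Also, the fact that $\GL_{1,A}$ is special, while true, plays no role in either proof and can be dropped.
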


\begin{proof}
(a) Let $K/k$ be a field extension and  $T\subset G_K=\GL_{1,A\otimes_k K}$ be a maximal $K$-torus.
Let $E$ be the centralizer of $T$ in $A\otimes_k K$.
An easy calculation over a separable closure $K_s$ of $K$ shows that $E$ is an $n$-dimensional
commutative \'etale $K$-subalgebra of $A\otimes_k K$ and that $T=R_{E/K}\G_{m,E}$.
It follows that $T$ is quasi-trivial, hence special.
Since all  maximal $K$-tori $T\subset G_K$ are special, $G$ is toric-friendly.

(b) follows from (a) and Corollary~\ref{cor.modR(G)}.
To deduce (c) from (a) and (b), set $A = M_{n}(k)$ (the matrix algebra).
\end{proof}

We now come to the main result of this section which asserts that
a toric-friendly semisimple groups of type $A$ is necessarily an adjoint group.

\begin{proposition} \label{prop.typeA}
Let $k$ be a field.
Consider a $k$-group
$G = (\SL_{n_1} \times \dots \times \SL_{n_r})/C$, where
$C \subset \mu : = \mu_{n_1} \times \dots \times \mu_{n_r}$ is a central
subgroup of $G\sc = \SL_{n_1} \times \dots \times \SL_{n_r}$
(not necessarily smooth).
If $C \neq \mu$ then  $G$ is not toric-friendly.
\end{proposition}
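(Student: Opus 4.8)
\emph{Strategy.}
By the equivalence (a)$\Leftrightarrow$(c) in Proposition~\ref{prop.Z} it suffices to produce a field extension $K/k$ and a maximal $K$-torus $T\sc\subset G\sc_K=\prod_i\SL_{n_i,K}$ for which the map $H^1(K,C)\to H^1(K,T\sc)$ is not surjective, where $C=\Ker(G\sc\to G)$. I first cut $C$ down. The character group $\X^*(\mu)=\bigoplus_i\Zz/n_i\Zz$ carries the trivial Galois action, and $C\neq\mu$, so some nonzero $\chi\in\X^*(\mu)$ vanishes on $C$; replacing $\chi$ by $\chi^{d/p}$, where $d$ is its order and $p\mid d$ a prime, we may take $\chi\colon\mu\onto\mu_p$ with $C\subseteq\Ker\chi$. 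Since $G\onto G\sc/\Ker\chi$ has central kernel of multiplicative type, Proposition~\ref{prop:CGG'}(a) lets us assume $C=\Ker\chi$. Writing $\chi=(\chi_i)$ with $\chi_i\colon\mu_{n_i}\to\mu_p$, the factors with $\chi_i=0$ split off a direct factor $\prod\PGL_{n_i}$ of $G$, which is toric-friendly by Proposition~\ref{lem:GL(A)}(c); so by Lemma~\ref{lem:product} we may assume every $\chi_i\neq0$, hence $p\mid n_i$ for all $i$ and $\mu/C\cong\mu_p$.

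\emph{Choice of $K$ and $T\sc$.}
Let $k'\supseteq k$ be a field extension possessing, for each $i$, a separable field extension $E_i/k'$ of degree $n_i$ --- e.g. $k'=k(s)$, since $x^{m}+sx+s$ is separable and irreducible of degree $m$ over $k(s)$ in every characteristic --- and put $K=k'((t))$ with its $t$-adic valuation $v$. For each $i$ set $M_i=E_i((t))$, a degree-$n_i$ separable extension of $K$ unramified for $v$, and let $T_i\subset\SL_{n_i,K}$ be the maximal torus $\Ker(N_{M_i/K}\colon R_{M_i/K}\Gm\to\Gm)$ attached to the \'etale algebra $M_i$; set $T\sc=\prod_iT_i$. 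From $1\to T_i\to R_{M_i/K}\Gm\xrightarrow{N}\Gm\to1$ we get $H^1(K,T_i)\cong K^*/N_{M_i/K}M_i^*$, and since $N_{M_i/K}$ carries valuation-$0$ elements to valuation-$0$ elements (unramifiedness) and $N_{M_i/K}(t)=t^{n_i}$, we have $v\bigl(N_{M_i/K}M_i^*\bigr)=n_i\Zz$.

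\emph{Computation.}
Consider $H^1(K,C)\to H^1(K,\mu)\to H^1(K,T\sc)$. From $1\to C\to\mu\xrightarrow{\chi}\mu_p\to1$ the image of the first arrow is $\Ker\bigl(H^1(K,\mu)\xrightarrow{\chi_*}H^1(K,\mu_p)\bigr)$; identifying $H^1(K,\mu)=\prod_iK^*/(K^*)^{n_i}$ and $H^1(K,\mu_p)=K^*/(K^*)^p$ from the Kummer sequences, $\chi_*$ takes $([a_i])_i$ to $\bigl[\prod_ia_i^{u_i}\bigr]$ for suitable integers $u_i$ prime to $p$. The second arrow is the product of the natural surjections $K^*/(K^*)^{n_i}\onto K^*/N_{M_i/K}M_i^*$, as seen from the evident ladder comparing $1\to\mu_{n_i}\to\Gm\xrightarrow{n_i}\Gm\to1$ with $1\to T_i\to R_{M_i/K}\Gm\xrightarrow{N}\Gm\to1$. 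Working modulo $(K^*)^p$, where each $x\mapsto x^{u_i}$ is an automorphism, one checks that the image of $H^1(K,C)$ in $H^1(K,T\sc)=\prod_iK^*/N_{M_i/K}M_i^*$ equals the whole group if and only if $K^*=(K^*)^p\cdot\prod_iN_{M_i/K}M_i^*$. But $v$ sends the right-hand side onto $\gcd(p,n_1,\dots,n_r)\Zz=p\Zz\subsetneq\Zz$ (as $p\mid n_i$ for all $i$ and $p\ge2$), so $t$ lies outside it; hence the map is not surjective and $G$ is not toric-friendly.

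\emph{Main obstacle.}
The crux is the construction of $K$: one needs, over an arbitrary base field, extensions of the prescribed degrees $n_i$ whose norm subgroups are detectably small. The device is to take a complete discretely valued $K$ together with \emph{unramified} extensions $M_i/K$, so that the valuation pins $N_{M_i/K}M_i^*$ into $n_i\Zz$; the polynomial $x^m+sx+s$ supplies the required residue extensions (separable, but not necessarily Galois) in every characteristic, which is why it pays to work with the norm-one torus of an arbitrary \'etale algebra rather than only with cyclic tori. The remaining steps --- the reductions on $C$ and the elementary bookkeeping in $K^*/(K^*)^p$ --- are routine.
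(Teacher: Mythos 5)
Your argument is correct and follows essentially the same route as the paper's: reduce via Proposition~\ref{prop.Z} and Proposition~\ref{prop:CGG'}(a) to the case $C=\Ker(\chi)$ for a character $\chi\colon\mu\to\mu_p$, compute the maps $H^1(K,C)\to H^1(K,\mu)\to H^1(K,T\sc)$ for norm-one tori exactly as in Lemmas~\ref{lem:L} and~\ref{lem:mu}, and detect non-surjectivity with a degree/valuation function under which each norm group lands in $n_i\Zz$. The only substantive differences are cosmetic: you realize the witness field as $K=k(s)((t))$ with unramified extensions cut out by Eisenstein polynomials and use the $t$-adic valuation, where the paper takes $K=k(x_1,\dots,x_n)^{S_{n_1}\times\cdots\times S_{n_r}}$ with $L_i=K(x_{j_i})$ and the total-degree function, and your preliminary splitting-off of the $\PGL_{n_i}$ factors with $\chi_i=0$ (forcing $p\mid n_i$ for every remaining $i$) replaces the paper's bookkeeping with the exponents $d_i$ and the element $(s_1,1,\dots,1)$.
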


Before proceeding with the proof, we  fix some notation.
Let $L/K$ be a finite separable field extension of degree $n$.  Set
$$
R^1_{L/K}(\G_m) := \ker[N_{L/K}\colon R_{L/K}\G_{m,L}\to \G_{m,K}],
$$
where $N_{L/K}$ is the norm map. Clearly
$R^1_{L/K}(\G_m)$ can be embedded into  $\SL_{n,K}$ as a  maximal $K$-torus.
The embedding $K\into L$ induces
an embedding $\mu_{n,K}\into R^1_{L/K}\Gm$, where $n=[L:K]$.

The following two lemmas are undoubtedly known. We include  short
proofs below because we have not been able to find appropriate references.

\begin{lemma}\label{lem:L}
There is a commutative diagram
\begin{equation}\label{eq:L}
\xymatrix{
K^*/K^{* n} \ar[r]^{\simeq} \ar[d] &H^1(K,\mu_n) \ar[d] \\
K^*/N_{L/K}(L^*) \ar[r]^{\simeq}  &H^1(K,R^1_{L/K}\Gm)}
\end{equation}
where the horizontal arrows are canonical isomorphisms,
the right vertical arrow is induced by the embedding $\mu_n\into R^1_{L/K}\Gm$,
and the left vertical arrow is the natural projection.
\end{lemma}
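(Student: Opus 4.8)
The plan is to realise both horizontal isomorphisms in \eqref{eq:L} as boundary maps in fppf cohomology, attached to a morphism of short exact sequences of $K$-group sheaves, and then to read off the commutativity from the functoriality of connecting homomorphisms. Concretely, I would consider the morphism of short exact sequences of fppf sheaves of abelian groups on $\Spec K$ whose top row is the Kummer sequence $1\to\mu_n\to\Gm\xrightarrow{[n]}\Gm\to 1$ and whose bottom row is $1\to R^1_{L/K}\Gm\to R_{L/K}\Gm\xrightarrow{N_{L/K}}\Gm\to 1$ (exact because the norm is a smooth surjection of smooth $K$-groups with kernel $R^1_{L/K}\Gm$ by definition), the vertical maps being the identity of $\Gm$ on the right, the map $\iota\colon\Gm\to R_{L/K}\Gm$ induced by $K\into L$ in the middle, and the embedding $\mu_n\into R^1_{L/K}\Gm$ of the statement on the left. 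The left-hand square commutes because that embedding is by construction the restriction of $\iota$, and the right-hand square commutes because $N_{L/K}\circ\iota=[n]$ as endomorphisms of $\Gm$: for a $K$-algebra $R$ and $a\in R^{*}$, the element $\iota(a)=a\otimes 1$ of $(L\otimes_K R)^{*}$ acts on the free rank-$n$ module $L\otimes_K R$ by multiplication by $a$, hence has norm $a^{[L:K]}=a^{n}$.

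Next I would take flat cohomology over $K$ (which for the smooth groups in the bottom row is just Galois cohomology). From the Kummer sequence, using $H^1(K,\Gm)=\operatorname{Pic}(\Spec K)=0$, the boundary map $K^{*}\to H^1(K,\mu_n)$ is surjective with kernel $K^{*n}$, giving the canonical top isomorphism $K^{*}/K^{*n}\isoto H^1(K,\mu_n)$; from the bottom sequence, Shapiro's lemma and Hilbert~90 give $H^1(K,R_{L/K}\Gm)=H^1(L,\Gm)=0$, so the boundary map $K^{*}\to H^1(K,R^1_{L/K}\Gm)$ is surjective with kernel $N_{L/K}(L^{*})$, giving the canonical bottom isomorphism $K^{*}/N_{L/K}(L^{*})\isoto H^1(K,R^1_{L/K}\Gm)$. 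Functoriality of these boundary maps with respect to the morphism of sequences above yields a commutative square whose two horizontal arrows are these boundary maps, whose left vertical arrow is the identity of $K^{*}$, and whose right vertical arrow is the map $H^1(K,\mu_n)\to H^1(K,R^1_{L/K}\Gm)$ induced by $\mu_n\into R^1_{L/K}\Gm$. Since $a^{n}=N_{L/K}(a)$ for $a\in K^{*}$ we have $K^{*n}\subseteq N_{L/K}(L^{*})$, so the identity of $K^{*}$ descends to the natural projection $K^{*}/K^{*n}\onto K^{*}/N_{L/K}(L^{*})$; passing to quotients in the commutative square and using the two isomorphisms above produces precisely \eqref{eq:L}.

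I do not expect any genuine obstacle: the argument is pure functoriality, and the only point needing a word of care is that $\mu_n$ may fail to be smooth when $\charact K$ divides $n$, so that one must work with flat (fppf) cohomology rather than Galois cohomology. This causes no difficulty, since the Kummer sequence is still exact as a sequence of fppf sheaves and $H^1_{\mathrm{fl}}(K,\Gm)$ still vanishes, so the Kummer isomorphism and the whole diagram chase go through unchanged; the remaining verifications --- exactness of the two rows, the identity $N_{L/K}\circ\iota=[n]$, and naturality of the boundary maps --- are routine.
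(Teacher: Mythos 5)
Your proposal is correct and follows essentially the same route as the paper's proof: applying flat cohomology to the morphism from the Kummer sequence to the norm sequence $1\to R^1_{L/K}\Gm\to R_{L/K}\Gm\to\Gm\to 1$ and invoking Hilbert's theorem 90. You simply spell out in more detail the commutativity checks and the characteristic-$p$ point that the paper leaves implicit.
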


\def\id{{\rm id}}
\begin{proof}
Apply the flat cohomology functor to the commutative
diagram of commutative $K$-groups
$$
\xymatrix{
1\ar[r] &\mu_{n,K} \ar[r]\ar@{_{(}->}[d] &\G_{m,K} \ar[r]^n \ar@{_{(}->}[d]  &\G
_{m,K} \ar[r]\ar[d]^\id &1\\
1\ar[r] &R^1_{L/K}\Gm \ar[r]             &R_{L/K}\Gm  \ar[r]^{N_{L/K}}       &\G
_{m,K} \ar[r]           &1
}
$$
and use Hilbert's theorem 90.
\end{proof}

\begin{lemma}\label{lem:mu}
Suppose $r\mid n$. Then there is a commutative diagram
\begin{equation*}
\xymatrix{
K^*/K^{* n} \ar[r]^{\simeq} \ar[d]  &H^1(K,\mu_n) \ar[d]^{(n/r)_*} \\
K^*/K^{* r} \ar[r]^{\simeq}         &H^1(K,\mu_r) \, ,
}
\end{equation*}
where the horizontal arrows are canonical isomorphisms,
the right vertical arrow is induced by
the homomorphism $\mu_n \labelto{n/r} \mu_r$
given by $x\mapsto x^{n/r}$,
and the left vertical arrow is the natural projection.
\end{lemma}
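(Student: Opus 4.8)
The plan is to imitate the proof of Lemma~\ref{lem:L}: build a morphism between two Kummer sequences of commutative $K$-group schemes and apply the flat cohomology functor.

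First I would write down the Kummer sequences
\[
1\to \mu_{n,K}\to \G_{m,K}\xrightarrow{\ n\ }\G_{m,K}\to 1,
\qquad
1\to \mu_{r,K}\to \G_{m,K}\xrightarrow{\ r\ }\G_{m,K}\to 1,
\]
and connect them by a commutative ladder whose left-hand column is the homomorphism $\mu_{n,K}\to\mu_{r,K}$, $x\mapsto x^{n/r}$, whose middle column is the $(n/r)$-power map $\G_{m,K}\to\G_{m,K}$, and whose right-hand column is the identity map of $\G_{m,K}$. A direct check shows that both squares commute: in the left square $x\in\mu_{n}$ is sent to $x^{n/r}$ along either path (and $x^{n/r}$ does land in $\mu_{r}$, since $(x^{n/r})^{r}=x^{n}=1$), while in the right square $z$ is sent to $z^{n}$ along either path.

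Next I would apply flat cohomology to this ladder. By Hilbert's theorem 90 we have $H^1(K,\G_{m,K})=1$, so the connecting homomorphisms of the two rows yield the canonical isomorphisms $K^*/K^{*n}\isoto H^1(K,\mu_n)$ and $K^*/K^{*r}\isoto H^1(K,\mu_r)$; these are the horizontal arrows of the diagram in the lemma. Naturality of the connecting homomorphism with respect to the ladder then produces the desired commutative square: its right-hand column is $(n/r)_*\colon H^1(K,\mu_n)\to H^1(K,\mu_r)$, and its left-hand column is the map $K^*/K^{*n}\to K^*/K^{*r}$ induced by the identity on $K^*=H^0(K,\G_{m,K})$, i.e. the natural projection (which makes sense precisely because $r\mid n$ forces $K^{*n}\subseteq K^{*r}$).

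There is essentially no obstacle here; the only point that takes a moment of care is the bookkeeping of the vertical arrows --- verifying that, given the prescribed left-hand column $x\mapsto x^{n/r}$, the middle column is forced to be the $(n/r)$-power map and the right-hand column the identity, so that the induced map on $H^0$ (hence on the sources of the Kummer isomorphisms) is the plain quotient map and not some twist of it. Everything else is the formal naturality of the long exact cohomology sequence.
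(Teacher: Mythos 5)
Your proof is correct and is exactly the argument the paper uses: the paper's proof of this lemma consists of the same ladder of Kummer sequences (with columns $x\mapsto x^{n/r}$ on $\mu_n\to\mu_r$, the $(n/r)$-power map on $\G_m$, and the identity on $\G_m$), followed by applying flat cohomology and Hilbert's theorem 90. Your write-up just makes explicit the commutativity checks and the identification of the induced map on $H^0$ that the paper leaves to the reader.
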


\begin{proof}
Similar to that of Lemma \ref{lem:L} using the commutative diagram
$$
\xymatrix{
1\ar[r] &\mu_{n} \ar[r]\ar[d]^{n/r} &\Gm \ar[r]^n \ar[d]^{n/r}  &\Gm \ar[r]\ar[d]^\id  &1\\
1\ar[r] &\mu_{r} \ar[r]       &\Gm \ar[r]^r                     &\Gm \ar[r]            &1
}
$$
\end{proof}

\begin{example}\label{ex:SLn}
The group $G=\SL_{n,k}$ ($n\ge 2$) is not toric-friendly.
\end{example}

\begin{proof}
Since $\SL_n$ is special, it suffices to construct an extension $K/k$
and a maximal $K$-torus $T := R^1_{L/K}(\Gm)$ such that $H^1(K,T)\neq 1$.
In view of Lemma \ref{lem:L} it suffices to show that
$N_{L/K}(L^*) \neq K^*$ for some field extension $K/k$ and
some  finite separable field extension $L/K$ of degree $n$.
This is well known, see e.g. the proof of \cite[Proposition 3.1.46]{rowen}.
We include a short proof below as a way of motivating
a related but more complicated argument  at the end
of the proof of Proposition~\ref{prop.typeA}.

Let $L := k(x_1,\dots, x_n)$,
where $x_1, \dots, x_n$ are independent variables, and
$K := L^\Gamma$, where $\Gamma$ is the cyclic
group of order $n$ which acts on $L$ by cyclically permuting
$x_1, \dots, x_n$.  For $0 \ne a \in k[x_1,\dots,x_n]$,
let $\deg(a) \in \N$ denote the degree of $a$ as a polynomial
in $x_1, \dots, x_n$.  If  $a\in k(x_1, \dots, x_n)$,
$a = \frac{b}{c}$ with $ 0 \ne b, c \in k[x_1, \dots, x_n]$,
then we define $\deg(a) = \deg(b) - \deg(c)$.
This yields the usual degree homomorphism $\deg\colon L^* \to \Z$.
Since $\Nm_{L/K}(a)=\prod_{\gamma \in \Gamma} \gamma(a)$,
we see that $\deg(\Nm_{L/K}(a))=n\deg(a)$ is divisible by $n$,
for every $a \in L^*$.
On the other hand, $s_1=x_1+\dots+x_n \in K$ has degree $1$.
This shows that $N_{L/K}(L^*) \neq K^*$, as claimed.
\end{proof}

\begin{subsec}
{\em Proof of Proposition~\ref{prop.typeA}.}
Let $K/k$ be a field extension.
For each $i=1,\dots,r$,
let $L_i$ be a separable field extension of degree $n_i$ over $K$,
and  let $T = T_1 \times \dots \times T_r$ be a maximal $K$-torus of
$G\sc$, where  $T_i := R_{L_i/K}^1(\bbG_m)$.
By Proposition~\ref{prop.Z} it suffices to show that the composition
\begin{equation} \label{e.projection2}
H^1(K, C) \to H^1(K, \mu) \to H^1(K, T)
\end{equation}
is not surjective for some choice of extensions $K/k$ and $L_i/K_i$.
Since $C \subsetneqq \mu$, there exist
a prime $p$ and a non-trivial character $\chi \colon
\mu \to \mu_p$ such that $\chi(C) = 1$.
By Proposition \ref{prop:CGG'}(a) we may
assume that $C = \Ker(\chi)$.
For notational simplicity, let us suppose that $n_1, \dots, n_s$
are divisible by $p$ and $n_{s+1}, \dots, n_r$ are not, for
some $0 \le s \le r$.
Then it is easy to see that $\chi$ is of the form
\[
\chi(c_1, \dots, c_r) =
c_1^{d_1 \frac{n_1}{p}} \cdots c_s^{d_s \frac{n_s}{p}}
\]
for some integers $d_1, \dots, d_s$. Since $\chi$
is non-trivial on $\mu$, we have
$s \ge 1$ and $d_i$ is not divisible by $p$ for
some $i = 1, \dots, s$, say for $i = 1$.
That is, we may assume that $d_1$ is not divisible by $p$.

Lemma \ref{lem:L} gives
a concrete description of the second map in~\eqref{e.projection2}.
To determine the image of the map $H^1(K, C) \to H^1(K, \mu)$,
we examine the cohomology exact sequence
\[ \xymatrix{
 H^1(K, C)  \ar[r] & H^1(K, \mu) \ar@{=}[d] \ar[r]^{\chi_*}
& H^1(K, \mu_p) \ar@{=}[d] \\
 & \prod_{i = 1}^r K^*/K^{* n_i} \ar[r]^{\quad \chi_*} & K/K^{* p}}
\]
induced by the exact sequence $1 \to C \to \mu \xrightarrow{\chi} \mu_p \to 1$.
The image of $H^1(K, C)$ in $H^1(K, \mu)$ is the kernel of $\chi_*$.
By Lemma \ref{lem:mu} $\chi_*$ maps
the class of $(a_1, \dots, a_r)$ in $H^1(K, \mu) =  \prod_{i = 1}^r
K^*/K^{* n_i}$
to the class of $a_1^{d_1} \cdots a_s^{d_s}$ in $H^1(K, \mu_p)=K/K^{* p}$.
In other words,
the image of $H^1(K, C)$ in $H^1(K, \mu)$ is the subgroup of classes of
$r$-tuples $(a_1, \dots, a_r)$ in $H^1(K, \mu) =  \prod_{i = 1}^r
K^*/K^{* n_i}$ such that $a_1^{d_1} \dots a_s^{d_s} \in K^{* p}$.
Consequently, the image of $H^1(K, C)$ in
$H^1(K, T) =  \prod_{i = 1}^r K^*/N_{L_i/K}(L_i^*)$
consists of  classes of $r$-tuples $(a_1, \dots, a_r)$ such that
$a_1^{d_1} \dots a_s^{d_s} \in K^{* p}$.

It remains to construct a field extension $K/k$, separable field
extensions $L_i/K$ of degree $n_i$ for $i = 1, \dots, r$,
and an element $\alpha \in H^1(K, T) =
\prod_{i = 1}^r K^*/N_{L_i/K}(L_i^*)$ which cannot be represented by
$(a_1, \dots, a_r) \in (K^*)^r$
such that $a_1^{d_1} \cdots a_s^{d_s} \in K^{* p}$.
This will show that the map $H^1(K, C) \to H^1(K, T)$
is not surjective, as claimed.

Set $L := k(x_1, \dots, x_n)$, where $n = n_1 + \dots + n_r$
and $x_1, \dots, x_n$ are independent variables.
The symmetric group $S_n$ acts on $L$ by permuting these variables;
we embed $S_{n_1} \times \dots \times S_{n_r}$ into $S_n$ in the natural
way, by letting $S_{n_1}$ permute the first $n_1$ variables,
$S_{n_2}$ permute the next $n_2$ variables, etc.
Set $K := L^{S_{n_1} \times \dots \times S_{n_r}}$,
$s_1 := x_1 + \dots + x_n \in K$ and
\[ L_1 := K(x_1), \; \;  L_2 := K(x_{n_1 + 1}), \; \;  \dots \; \;
L_r := K(x_{n_1 + \dots + n_{r-1} + 1}) \, . \]
Clearly $[L_i:K] = n_i$.
We claim that the class
of $(s_1, 1, \dots, 1)$ in
$\prod_{i = 1}^r K^*/N_{L_i/K}(L_i^*)$ cannot be represented by
any $(a_1, \dots, a_r) \in (K^*)^r$ with
$a_1^{d_1} \cdots a_s^{d_s} \in K^{* p}$.

Let $\deg \colon L^* \to \Z$ be the degree map, as
in Example~\ref{ex:SLn}. Arguing as we did there, we see that
$\deg(N_{L_i/K}(a))$ is divisible by $n_i$ for every
$i = 1, \dots, r$ and every $a \in L_i^*$.  In particular,
$(a_1, \dots, a_r) \mapsto \deg(a_i)+n_i\Z$
is a well-defined function
$\prod_{i = 1}^r K^*/N_{L_i/K}(L_i^*) \to \bbZ/ n_i \bbZ$,
and consequently,
\[ f(a_1, \dots, a_n) := d_1 \deg(a_1) + \dots + d_s \deg(a_s)+p\Z \]
is a well-defined function $H^1(K, T) \to \bbZ/ p \bbZ$.
We have $f(a_1,\dots,a_n)=\deg(a_1^{d_1} \cdots a_s^{d_s})$.
If $a_1^{d_1} \cdots a_s^{d_s} \in K^{* p}$ then
$f(a_1, \dots, a_r)= 0$ in $\bbZ/p \bbZ$.
On the other hand, since $\deg(1) = 0$, $\deg(s_1) = 1$
and $d_1$ is not divisible by $p$, we conclude
that $f(s_1, 1, \dots, 1) \neq 0$ in $\bbZ/p \bbZ$.
This proves the claim and the proposition.
\qed
\end{subsec}

\section{Groups of type $C_n$ and outer forms of $A_n$}

\begin{proposition} \label{prop:C}
 No absolutely simple $k$-group of type $C_n$ ($n \ge 2$)
is  toric-friendly.
\end{proposition}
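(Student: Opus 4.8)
The plan is to reduce everything to the split groups $\Sp_{2n}$ and $\PSp_{2n}$ and then exhibit one explicit bad maximal torus. For $n\ge 2$ an absolutely simple $k$-group $G$ of type $C_n$ is either simply connected (a form of $\Sp_{2n}$) or adjoint (a form of $\PSp_{2n}$), since the centre of the simply connected group of type $C_n$ is $\mu_2$ and this has no proper nontrivial subgroup scheme. As every torus over a separably closed field is split, $G$ becomes isomorphic to $\Sp_{2n}$ (resp.\ $\PSp_{2n}$) already over some finite separable extension $K_0/k$. A field extension witnessing failure of toric-friendliness over $K_0$ also witnesses it over $k$, so it suffices to prove that $\Sp_{2n,F}$ and $\PSp_{2n,F}$ are not toric-friendly for \emph{every} field $F$. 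Enlarging $F$ once more by means of the field construction in the proof of Example~\ref{ex:SLn} (applied with $n=2$), we may moreover assume that $F$ carries a separable quadratic field extension $L/F$ with $N_{L/F}(L^*)\neq F^*$.

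Next I would set up the torus. Writing $\Sp_{2n}=\Sp(V)$ with $V=V_1\perp\dots\perp V_n$ an orthogonal direct sum of symplectic planes gives a block embedding $\prod_{i=1}^n\Sp(V_i)=(\Sp_2)^n\hookrightarrow\Sp_{2n}$. The norm-one torus $R^1_{L/F}\Gm$ is an anisotropic maximal torus of $\Sp_2=\SL_2$, so $T\sc:=(R^1_{L/F}\Gm)^n$ is a maximal $F$-torus of $G\sc_F=\Sp_{2n,F}$. Let $C:=\Ker(G\sc\to G)$, so $C=1$ if $G$ is simply connected and $C=\mu_2$ (the centre of $\Sp_{2n}$) if $G$ is adjoint. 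Under the block embedding the centre $\mu_2$ of $\Sp_{2n}$ is the \emph{diagonal} copy of the centres $\mu_2$ of the factors $\Sp_2$; and inside each one-dimensional factor $R^1_{L/F}\Gm$ this $\mu_2$ is its unique order-two subgroup scheme, which is exactly the subgroup $\mu_2\hookrightarrow R^1_{L/F}\Gm$ occurring in Lemma~\ref{lem:L} (for $n=2$ there).

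Finally I would run the cohomology computation and invoke Proposition~\ref{prop.Z}, according to which it is enough to show that $H^1(F,C)\to H^1(F,T\sc)$ is not surjective. If $G$ is simply connected this is immediate, as the source is trivial while $H^1(F,T\sc)=(F^*/N_{L/F}L^*)^n\neq 1$ (equivalently: $\Sp_{2n}$ is special, so toric-friendliness would force $H^1(F,T\sc)\hookrightarrow H^1(F,\Sp_{2n})=1$). If $G$ is adjoint, Lemma~\ref{lem:L} identifies each factor map $H^1(F,\mu_2)=F^*/F^{*2}\to H^1(F,R^1_{L/F}\Gm)=F^*/N_{L/F}L^*$ with the natural surjection, and the diagonal embedding of $C$ then shows that the image of $H^1(F,C)\to H^1(F,T\sc)=(F^*/N_{L/F}L^*)^n$ is precisely the diagonal subgroup. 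For $n\ge 2$, and since $F^*/N_{L/F}L^*\neq 1$, this diagonal is a proper subgroup, so the map is not surjective and $G$ is not toric-friendly.

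The step requiring the most care is the identification of the central $\mu_2\subset\Sp_{2n}$ with the diagonal $\mu_2$ inside $T\sc=(R^1_{L/F}\Gm)^n$ as group schemes, in particular in characteristic $2$, where $\mu_2$ is infinitesimal and the whole argument must be carried out with flat cohomology, exactly as in Lemma~\ref{lem:L}; the remaining content is the short Hilbert~90 / Kummer-sequence computation already packaged in Lemma~\ref{lem:L} and Proposition~\ref{prop.Z}.
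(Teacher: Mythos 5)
Your proof is correct, and its core construction is the one the paper uses: embed $(\Sp_2)^n$ block-diagonally into $\Sp_{2n}$, take a product $S^n$ of non-special maximal tori of $\Sp_2=\SL_2$, and exploit the fact that for $n\ge 2$ the centre $\mu_2$ of $\Sp_{2n}$ sits \emph{diagonally} in $S^n$. Where you genuinely diverge is in the verification. The paper reduces at once to the adjoint group over an algebraically closed field via Proposition~\ref{prop:CGG'}(a), works with an \emph{arbitrary} maximal torus $S\subset\SL_{2,K}$ having $H^1(K,S)\ne 1$ (supplied abstractly by Example~\ref{ex:SLn}), and detects nontriviality of the pushed-forward class in $H^1(K,T)$ by the retraction character $\chi(x_1,\dots,x_n)=x_1x_2^{-1}$, which kills the diagonal centre and restricts to the identity on the first factor; specialness of $\Sp_{2n}$ then places the class in $\ker[H^1(K,T)\to H^1(K,G)]$. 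You instead fix the explicit norm-one torus $R^1_{L/F}\Gm$, use Lemma~\ref{lem:L} to compute all the cohomology groups, and check directly that the image of $H^1(F,\mu_2)$ in $(F^*/N_{L/F}(L^*))^n$ is the diagonal, hence proper, concluding via criterion (c) of Proposition~\ref{prop.Z}. The paper's $\chi$-trick buys independence from any explicit description of the torus or its $H^1$ (which is why the identical device reappears in the outer type $A_n$ case, Proposition~\ref{prop:outer-A}); your computation buys concreteness at the cost of having to identify the central $\mu_2$ with the $\mu_2$ of Lemma~\ref{lem:L} inside each factor, a point you rightly flag and which does hold in characteristic $2$ with flat cohomology. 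Your direct treatment of the simply connected case and your reduction to split groups over a finite separable extension of $k$ (rather than passing to $\kbar$) are both sound.
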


\begin{proof}
 Clearly we may assume that $k$ is algebraically closed.
We may assume also that $G$ is adjoint,
see Proposition~\ref{prop:CGG'}(a).
We see that $G=\PSp_{2n}$ and $G\sc =\Sp_{2n}$.
By Example~\ref{ex:SLn}
$\SL_2$ is not toric-friendly.
This means that there exist a field extension $K/k$, a maximal $K$-torus
$S\subset \SL_{2,K}$, and a cohomology class $a_S\in H^1(K,S)$ such that $a_S\neq 1$.
We consider the standard embedding
$$
(\SL_2)^n=(\Sp_2)^n\into \Sp_{2n},\quad n\ge 2.
$$
Set $T\sc=S^n\subset (\Sp_2)^n\subset\Sp_{2n}=G\sc$.
Let $\iota\colon S\into T\sc=S^n$ be the embedding as the first factor.
Set $a\sc=\iota_*(a_S)\in H^1(K,T\sc)$.
Let $T$ be the image of $T\sc$ in $G=\PSp_{2n}$, and let
$a$ be the image of $a\sc$ in $H^1(K,T)$.

Now observe that the homomorphism
$$
\chi\colon T\sc=S^n\to S,\quad (x_1,\dots,x_n) \mapsto x_1 x_2^{-1}
$$
factors through $T$ (recall that  $n\ge 2$).
Since $\chi\circ \iota={\rm id}_S$, we see that $a\neq 1$.
On the other hand, the image of $a\sc$ in $H^1(K,G\sc)$ is 1 (because $G\sc=\Sp_{2n}$ is special),
hence $a\in \ker[H^1(K,T)\to H^1(K,G)]$, and we see that
$G=\PSp_{2n}$ is not toric-friendly.
\end{proof}

\begin{proposition}\label{prop:outer-A}
 No absolutely simple $k$-group of outer type $A_n$ ($n \ge 2$)
is toric-friendly.
\end{proposition}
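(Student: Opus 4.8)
The plan is to reduce to the quasi-split adjoint group $\PSU_{n+1}$ and then to show that this group fails the criterion of Proposition~\ref{prop.nontrivial}, by exhibiting a maximal torus $T$ and a field extension over which $H^1(\,\cdot\,,T\sc)\to H^1(\,\cdot\,,T)$ is nontrivial.

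\textbf{Reduction to the quasi-split adjoint case.} By Proposition~\ref{prop:CGG'}(a) it suffices to show that no \emph{adjoint} absolutely simple $k$-group of outer type $A_n$ is toric-friendly. So let $G$ be such a group, with associated quadratic separable extension $l/k$, and suppose $G$ is toric-friendly. Since toric-friendliness is inherited by every field extension, I may replace $k$ by the function field $K=k(\mathcal B)$ of the variety $\mathcal B$ of Borel subgroups of $G$. Over $K$ the group $G$ has a Borel subgroup, hence is quasi-split; and since $\mathcal B$ is smooth and geometrically integral, $k$ is algebraically closed in $K$, so $L:=l\otimes_kK$ is again a quadratic field extension of $K$ and $G_K$ is still of outer type $A_n$. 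Thus $G_K\cong\PSU_{n+1}$, the quasi-split special projective unitary group with respect to $L/K$, and it is toric-friendly. Renaming, it therefore suffices to prove: \emph{for any field $k$, any separable quadratic field extension $l/k$ and any $n\ge 2$, the quasi-split group $G=\PSU_{n+1}$ with respect to $l/k$ is not toric-friendly.}

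\textbf{The quasi-split case.} Here $G\sc=\SU_{n+1}=\SU(V,h)$, where $h=\langle c_1,\dots,c_{n+1}\rangle$ is a diagonal hermitian form over $l/k$ (every nondegenerate hermitian form over a separable quadratic extension has an orthogonal basis $e_1,\dots,e_{n+1}$). Let $T\sc\subset\SU_{n+1}$ be the diagonal torus
$$T\sc=\{\,(\lambda_1,\dots,\lambda_{n+1})\in\bigl(R^1_{l/k}(\Gm)\bigr)^{n+1}\ :\ \lambda_1\cdots\lambda_{n+1}=1\,\}\,;$$
it is a maximal $k$-torus (over $\kbar$ it is the diagonal torus of $\SL_{n+1}$). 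Let $f\colon G\sc\to G$ be the isogeny; then $\ker f=Z(\SU_{n+1})$ consists of the scalar matrices $\zeta I$ with $\zeta^{n+1}=1$ and $N_{l/k}(\zeta)=1$, embedded diagonally in $T\sc$, so $T:=f(T\sc)$ is a maximal $k$-torus of $G$ with $f^{-1}(T)=T\sc$. Because $n\ge 2$, I can use the $k$-homomorphisms
$$\iota\colon R^1_{l/k}(\Gm)\to T\sc,\ \ \lambda\mapsto(\lambda,1,\lambda^{-1},1,\dots,1),\qquad \chi\colon T\sc\to R^1_{l/k}(\Gm),\ \ (\lambda_i)\mapsto\lambda_1\lambda_2^{-1}.$$
One checks $\chi\circ\iota=\mathrm{id}$, and $\chi$ is trivial on $\ker f$, so $\chi$ factors as $\chi=\bar\chi\circ f|_{T\sc}$ for a homomorphism $\bar\chi\colon T\to R^1_{l/k}(\Gm)$. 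Now base change to $k(t)$, $t$ an indeterminate; then $l(t)$ is a quadratic field extension of $k(t)$, and a degree argument exactly as in the proof of Example~\ref{ex:SLn} (the degree of $N_{l(t)/k(t)}(a)$ is even for every $a\in l(t)^*$, while $\deg t=1$) shows that the class $\alpha_1$ of $t$ in $H^1(k(t),R^1_{l(t)/k(t)}(\Gm))=k(t)^*/N_{l(t)/k(t)}(l(t)^*)$ (Lemma~\ref{lem:L}) is nontrivial. Put $\alpha\sc:=\iota_*(\alpha_1)\in H^1(k(t),T\sc)$ and let $\alpha\in H^1(k(t),T)$ be its image. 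Then $\bar\chi_*(\alpha)=(\chi\circ\iota)_*(\alpha_1)=\alpha_1\ne 1$, so $\alpha\ne 1$; hence $H^1(k(t),T\sc)\to H^1(k(t),T)$ is nontrivial, and $G=\PSU_{n+1}$ is not toric-friendly by Proposition~\ref{prop.nontrivial}.

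\textbf{The main obstacle.} The delicate step is the first one: to make the group quasi-split by a field extension that does \emph{not} absorb $l$ — otherwise $G$ would become of inner type $A_n$, where $\PGL$-type groups \emph{are} toric-friendly by Proposition~\ref{lem:GL(A)}, so no contradiction could arise. Passing to the function field of the \emph{geometrically integral} variety of Borel subgroups is exactly what guarantees both that the group becomes quasi-split and that the quadratic extension survives. Once one is in the quasi-split case, the construction of $T\sc$, of the auxiliary homomorphisms $\iota$ and $\chi$, and of the non-norm element over $k(t)$ is routine and parallels Example~\ref{ex:SLn} and the end of the proof of Proposition~\ref{prop.typeA}.
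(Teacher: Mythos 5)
Your proof is correct, but it differs from the paper's in two places, both in how you arrange to be in a position to run the ``diagonal torus plus $\iota,\chi$'' construction (which itself is essentially identical to the paper's, with $\iota(\lambda)=(\lambda,1,\lambda^{-1},1,\dots,1)$, $\chi((\lambda_i))=\lambda_1\lambda_2^{-1}$ in place of the paper's $(x,x^{-1},1,\dots,1)$ and $x_1x_3^{-1}$). First, the paper does not pass to the quasi-split form at all: it proves a self-contained Lemma~\ref{lem: division-algebras} about central division algebras with involutions of the second kind (finding a regular semisimple Hermitian element whose centralizer is a splitting field of the form $K\otimes_k F$), which yields a \emph{finite separable} extension $F/k$ over which $G$ becomes $\PSU(L^{n+1},h)$ for some (arbitrary, then diagonalized) Hermitian form. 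Your reduction via the function field of the variety of Borel subgroups reaches the quasi-split $\PSU_{n+1}$ directly; the key point you correctly identify -- that geometric integrality of the Borel variety keeps $l\otimes_k K$ a field, so the group stays outer -- is exactly what makes this work, and it lets you bypass the division-algebra lemma entirely at the cost of a transcendental rather than finite extension (which is harmless for this statement). Second, to produce a class with $H^1(\,\cdot\,,R^1_{L/K}\Gm)\neq 1$ the paper passes to the Laurent series field $F((t))$ and quotes Serre's computation of $H^1$ over local fields, whereas you use $k(t)$ and the even-degree-of-norms argument modeled on Example~\ref{ex:SLn}; both are valid, and yours is arguably more elementary and more uniform with the type-$A$ inner-form computations elsewhere in the paper.
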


Our proof of Proposition~\ref{prop:outer-A} will rely on the following lemma.

\begin{lemma}\label{lem: division-algebras}
Let $k$ be a field, $K/k$ a separable quadratic extension,
and $D/K$ a central division algebra of dimension $r^2$ over $K$
with an involution $\sigma$ of the second kind
(i.e., $\sigma$ acts non-trivially on $K$ and trivially on $k$).
Then there exists a finite separable field extension $F/k$ such
that $L:=K\otimes_k F$ is a field, and $D\otimes_K L$ is split
(i.e., is $L$-isomorphic to the matrix algebra $M_r(L)$).
\end{lemma}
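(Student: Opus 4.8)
The plan is to exhibit a maximal subfield $M$ of $D$ that is separable over $K$, stable under $\sigma$, and on which $\sigma$ acts non-trivially; the field $F:=M^\sigma$ of $\sigma$-invariants of $M$ will then be the extension we want. First, if $k$ (hence $K$) is finite, then $D$ is a field by Wedderburn's theorem, so $r=1$ and $F=k$ works; so from now on I assume $k$ infinite.

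To build $M$, consider the $k$-subspace $V:=\{x\in D:\sigma(x)=x\}$ of $\sigma$-symmetric elements. Since $K/k$ is separable, $K\otimes_k\kbar\cong\kbar\times\kbar$, the two factors being interchanged by $\sigma$; hence $D\otimes_k\kbar\cong M_r(\kbar)\times M_r(\kbar)$, and $\sigma\otimes{\rm id}$ swaps the two factors, composed with an anti-automorphism of $M_r(\kbar)$. A short computation then shows that the first projection identifies $V\otimes_k\kbar$ with $M_r(\kbar)$, and that under this identification the reduced characteristic polynomial of an element of $V$ (which has degree $r$) goes to the characteristic polynomial of the corresponding matrix. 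Therefore the locus of $x\in V$ for which the reduced characteristic polynomial is separable — in particular it then equals the minimal polynomial of $x$ over $K$ — is a Zariski-open subscheme of $V$, viewed as affine space over $k$: it is the non-vanishing locus of the discriminant of the reduced characteristic polynomial, a polynomial function in $x$ with coefficients in $K$, hence defining a subscheme over $k$. It is non-empty, since its base change to $\kbar$ is the (non-empty) locus of matrices with pairwise distinct eigenvalues. As $k$ is infinite, this open subscheme has a $k$-point $x$.

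For this $x$, the commutative subalgebra $M:=K[x]\subseteq D$ has $K$-dimension $r$; being a subring of the division algebra $D$, it is a field, so $M/K$ is a field extension of degree $r$, separable by the choice of $x$, and maximal among subfields of $D$ — hence it splits $D$, i.e. $D\otimes_K M\cong M_r(M)$. Moreover $\sigma(x)=x$ and $\sigma(K)=K$, so $M$ is $\sigma$-stable and $\sigma|_M$ is non-trivial (already on $K$). Put $F:=M^\sigma$, so $[M:F]=2$; since $K\not\subseteq F$ while $[KF:F]\le 2$, it follows that $KF=M$ and $K\cap F=k$, whence $[F:k]=r$ and multiplication gives an isomorphism $K\otimes_k F\isoto M$. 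Finally $F/k$ is separable, being a subextension of the separable extension $M/k$. Taking $L:=K\otimes_k F\cong M$ completes the proof, since then $D\otimes_K L\cong M_r(L)$.

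The main obstacle is the second step: finding an element of $\{x\in D:\sigma(x)=x\}$ that generates a \emph{separable} maximal subfield of $D$. This is exactly where the hypothesis that $\sigma$ is of the second kind is used essentially — it forces the base change of $V$ to $\kbar$ to be isomorphic to $M_r(\kbar)$ — together with the density argument over $\kbar$ and the observation that the relevant open condition descends to $k$. The remaining ingredients (the finite-field case, the identification $M\cong K\otimes_k F$, separability of $F/k$) are routine.
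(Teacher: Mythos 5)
Your argument is correct and follows essentially the same route as the paper: both pass to the case of infinite $k$, consider the $k$-space of $\sigma$-Hermitian elements, use the splitting $D\otimes_k\kbar\simeq M_r(\kbar)\times M_r(\kbar)$ (with $\sigma$ swapping the factors) to find a Zariski-dense open locus of regular semisimple Hermitian elements defined over $k$, pick a $k$-point $x$, and take for $F$ the $\sigma$-fixed subfield of the degree-$r$ separable splitting field generated by $x$ (the paper uses the centralizer of $x$ where you use $K[x]$, but these coincide for regular semisimple $x$). The only cosmetic difference is that you make the openness explicit via the discriminant of the reduced characteristic polynomial, where the paper invokes "a standard argument."
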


\begin{proof}[Proof of the lemma]
Since there are no non-trivial central division algebras over finite fields,
we may assume that $k$ and $K$ are infinite.
Let \[ H=\{x\in D\ |\ x^\sigma=x\} \]
denote the $k$-space of Hermitian elements of $D$.
Consider the embedding $D\hookrightarrow M_r(\Kbar )$ induced
by an isomorphism $D\otimes_K \Kbar \simeq M_r(\Kbar)$,
where $\Kbar $ is a separable closure of $K$.
An element $x$ of $D$ is called semisimple regular
if its image in $D\otimes_K \Kbar \simeq M_r(\Kbar )$
is a semisimple matrix with $r$ distinct eigenvalues.
A standard argument using  an isomorphism
$D\otimes_k  \Kbar \simeq M_r(\Kbar )\times M_r(\Kbar )$
shows that there is a dense open subvariety
$H_{\reg}$  in the space $H$, consisting of semisimple regular elements.
Clearly $H_{\reg}$ is defined over $k$ and contains $k$-points.

Let $x\in H_{\reg}(k)\subset D$ be a semisimple regular Hermitian element.
Let $L$ be the centralizer of $x$ in $D$.
Since $x$ is Hermitian ($\sigma$-invariant),
the $k$-algebra $L$ is $\sigma$-invariant.
Since $x$  is semisimple and regular, the algebra $L$ is
a commutative \'etale $K$-subalgebra of $D$
of dimension $r$ over $K$ (we calculate in $D\otimes_K  \Kbar $).
Clearly $L$ is a field, $[L:K]=r$, and $L$ is separable over $k$.
Since $L\subset D$ and $[L:K]=r$, the field $L$ is a splitting field for $D$,
see e.g. \cite[Corollary 13.3]{Pi}.

Since $L\supset K$, we see that $\sigma$ acts non-trivially on $L$.
Let $F = L^{\langle \sigma \rangle}$ denote
the subfield of $L$ consisting of elements
fixed by $\sigma$. Then $[L:F]=2$ and $[F:k]=r$.
Clearly $F$ is separable over $k$.
We have  $F\cap K=k$ and $FK=L$, hence $L=K\otimes_k F$.
\end{proof}

\begin{subsec}{\em Proof of Proposition \ref{prop:outer-A}.}
By Proposition~\ref{prop:CGG'}(a) we may assume that $G$ is adjoint.
By Lemma~\ref{lem: division-algebras}
there is a finite separable field extension $F/k$
such that $G_F\simeq \PSU(L^{n+1}, h)$, where $L/F$
is a separable quadratic extension
and $h$ is a Hermitian form on $L^{n+1}$.
It suffices to prove that $G_F=\PSU(L^{n+1},h)$ is not  toric-friendly.

Set $S=R^1_{L/F}\Gm$.
We set $G_F\sc=\SU(L^{n+1}, h)$.
We may assume that $h$ is a diagonal form,
see \cite[Proposition (6.2.4)(1)]{Knus} or \cite[Theorem 7.6.3]{Scharlau}.
Consider the diagonal torus $S^{n+1}\subset {\rm U}(L^{n+1},h)$ and set
$T\sc=S^{n+1}\cap\SU(L^{n+1},h)$.

We claim that there exists a field extension $K/F$ such
that $H^1(K,S)\neq 1$. Indeed, take
$K=F((t))$, the field of formal Laurent series over $F$.
Then by \cite[Prop.~V.2.3(c)]{Serre-CL}
$H^1(K,S)\simeq H^1(F,S)\times \Z/2\Z\neq  1 $.

Now let $a_S\in H^1(K,S)$, $a_S\neq 1$, and consider the embedding
$$
\iota\colon S\into T\sc\subset S^{n+1},\quad x\mapsto (x, x^{-1},1,\dots,1).
$$
Set $a\sc=\iota_*(a_S)\in H^1(K,T\sc)$.
Let $T$ be the image of $T\sc$ in $G_F= \PSU(L^{n+1}, h)$ and
$a$ be the image of $a\sc$ in $H^1(K,T)$.

Note that the homomorphism
$$
\chi\colon T\sc\to S,\quad (x_1,\dots,x_n,x_{n+1}) \mapsto x_1 x_3^{-1}
$$
factors through $T$  (recall that  $n\ge 2$).
Since $\chi\circ \iota={\rm id}_S$, we see that $a\neq 1$.
Now by Proposition \ref{prop.nontrivial} $G_F$ and hence $G$ are not toric-friendly.
\qed
\end{subsec}

\section{Classification of semisimple  toric-friendly  groups}

\begin{lemma} \label{lem:alg-closed}
Let $k$ be an algebraically closed field.
If a  semisimple $k$-group $G$ is  toric-friendly,
then it is adjoint of type $A$,
that is, $G \simeq\prod_i \PGL_{n_i}$ for some integers $n_i \ge 2$.
\end{lemma}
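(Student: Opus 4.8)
The plan is to combine the structure theory of semisimple groups over an algebraically closed field with the type-by-type non-examples already established in the excerpt. Over $k = \kbar$, every semisimple $k$-group is a quotient of a product of simply connected almost-simple groups, so one reduces first to the almost-simple case. More precisely, write $G\sc = \prod_i G_i\sc$ with each $G_i\sc$ simply connected and almost simple; since $G$ is a central quotient of $G\sc$, Proposition~\ref{prop:CGG'}(a) shows that if $G$ is toric-friendly then so is its adjoint quotient $G\ad = \prod_i G_i\ad$. By Lemma~\ref{lem:product}, each adjoint factor $G_i\ad$ is then toric-friendly, so it suffices to show that an absolutely simple adjoint toric-friendly $k$-group (over $k$ algebraically closed) must be of type $A$.

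Next I would invoke the classification of Dynkin types. The absolutely simple adjoint groups over $\kbar$ fall into types $A_n$ ($n\ge 1$), $B_n$, $C_n$ ($n\ge 2$), $D_n$ ($n \ge 4$), $E_6$, $E_7$, $E_8$, $F_4$, $G_2$. I want to eliminate every type except $A$. Type $C_n$ with $n\ge 2$ is ruled out by Proposition~\ref{prop:C}. For the remaining types $B_n$, $D_n$, $E_6$, $E_7$, $E_8$, $F_4$, $G_2$, the idea is to locate inside the group a subgroup of type $A_1$ (or more precisely an $\SL_2$ whose image is not adjoint), or to apply Theorem~\ref{thm.special}: if $G$ is split toric-friendly then $G\sc$ is special, and by Grothendieck's classification of semisimple special groups over an algebraically closed field the only almost-simple simply connected special groups are $\SL_n$ and $\Sp_{2n}$. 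Since $G$ is adjoint of type different from $A$, the simply connected cover $G\sc$ is one of $\Spin_n$ ($n \ge 5$, $n \ne 6$), $\Sp_{2n}$, $E_6\sc$, $E_7\sc$, $E_8$, $F_4$, $G_2$; of these only $\Sp_{2n}$ is special, and that case is type $C_n$, already excluded. Thus no type outside $A$ survives, and combining with type $C$ being excluded we are done.

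Finally, for type $A_n$ itself one must still check that $G$ is \emph{adjoint} — i.e. rule out $\SL_{n+1}$ and its intermediate quotients. But over $\kbar$ this is exactly the content of Proposition~\ref{prop.typeA} (with $r = 1$): if $G = \SL_{n}/C$ with $C \subsetneq \mu_{n}$ then $G$ is not toric-friendly, so the only toric-friendly option in type $A$ is $\PGL_{n}$. Reassembling: each absolutely simple factor of $G\ad$ is $\PGL_{n_i}$, and since over $\kbar$ a semisimple group is toric-friendly iff its adjoint quotient is (this follows for the "only if" direction from Proposition~\ref{prop:CGG'}(a)), $G$ itself must already be adjoint, hence $G \simeq \prod_i \PGL_{n_i}$ with $n_i \ge 2$. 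The main obstacle is the bookkeeping in the elimination of the exceptional and orthogonal types: I would lean on Theorem~\ref{thm.special} together with Grothendieck's list rather than constructing explicit bad tori type by type, since the split hypothesis is automatic over $\kbar$ and this makes the argument uniform. One must be slightly careful that Theorem~\ref{thm.special} requires $G$ split (true here since $k = \kbar$) and that "special $\Rightarrow$ type $A$ or $C$" is read off correctly from Grothendieck's classification of the \emph{semisimple} special groups, where the almost-simple special ones are precisely $\SL_n$ and $\Sp_{2n}$.
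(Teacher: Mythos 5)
Your overall strategy coincides with the paper's: reduce to the adjoint simple factors via Proposition~\ref{prop:CGG'}(a) and Lemma~\ref{lem:product}, eliminate all types other than $A$ by combining Theorem~\ref{thm.special} with Grothendieck's classification of special groups and Proposition~\ref{prop:C} for type $C$, and then invoke Proposition~\ref{prop.typeA} to force adjointness. Up to and including the identification of each simple adjoint factor of $G\ad$ with some $\PGL_{n_i}$, your argument is correct and is essentially the paper's.

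The gap is in your final ``reassembling'' step. You assert that over $\kbar$ a semisimple group is toric-friendly if and only if its adjoint quotient is; that equivalence is false (its ``if'' direction would make $\SL_n$ toric-friendly because $\PGL_n$ is, contradicting Example~\ref{ex:SLn}), and Proposition~\ref{prop:CGG'}(a) gives only the ``only if'' direction, as you yourself note. Moreover, even if the equivalence held, it would not tell you that $G$ equals $G\ad$: it would merely reconfirm that $G$ is toric-friendly, which is the hypothesis. Knowing that each simple factor of $G\ad$ is $\PGL_{n_i}$ does not by itself rule out intermediate quotients, because the central subgroup $C \subset \mu := \mu_{n_1}\times\dots\times\mu_{n_r}$ with $G = (\SL_{n_1}\times\dots\times\SL_{n_r})/C$ need not be a product of subgroups of the individual $\mu_{n_i}$ (consider $(\SL_n\times\SL_n)/\mu_n$ with $\mu_n$ embedded diagonally: both adjoint factors are $\PGL_n$, yet the group is not adjoint). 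The correct --- and readily available --- step is to apply Proposition~\ref{prop.typeA} with general $r$, not only with $r=1$: since $G = (\SL_{n_1}\times\dots\times\SL_{n_r})/C$ is toric-friendly, that proposition forces $C = \mu$, i.e., $G = G\ad = \prod_i \PGL_{n_i}$. This is exactly how the paper concludes.
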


\begin{proof} First assume that $G$ is simple.
By Theorem ~\ref{thm.special} the simply connected cover
$G\sc$ of $G$ is special. By a theorem
of Grothendieck~\cite[Theorem 3]{Grothendieck}
$G\sc$ is special if and only if $G$ is of type $A_n$, $n \ge 1$ or
$C_n$, $n \ge 2$.  Proposition~\ref{prop:C} rules out
the second possibility. Thus $G$ is of type $A$.

Now let $G$ be semisimple.
By Proposition \ref{prop:CGG'}(a) $G\ad$ is  toric-friendly.
Write $G\ad=\prod_i G_i$, where each $G_i$ is an adjoint
simple group, then by Lemma \ref{lem:product} each $G_i$ is  toric-friendly.
As we have seen, this implies that each
$G_i$ is of type $A$, i.e., is isomorphic to $\PGL_{n_i}$ for some $n_i$.
By Proposition \ref{prop.typeA} $G$ is adjoint, that is,
$G=G\ad=\prod_i \PGL_{n_i}$.
\end{proof}

\begin{subsec}{\em Proof of Main Theorem \ref{thm:non-split-intro}.}
If $G$ is toric-friendly, then clearly $G_\kbar$ is toric-friendly,
where $\kbar$ is an algebraic closure of $k$.
By Lemma \ref{lem:alg-closed} $G$ is adjoint of type $A$.
Write $G=\prod_i R_{F_i/k}G'_i$, where each $F_i/k$ is a finite separable extension
and $G'_i$ is a form of $\PGL_{n_i, F_i}$.
By Lemmas \ref{lem:product} and \ref{lem:Weil-restriction} each $G'_i$ is toric-friendly,
and by Proposition~\ref{prop:outer-A} $G'_i$ is an \emph{inner} form of $\PGL_{n_i, F_i}$.

Conversely, by Proposition~\ref{lem:GL(A)}
an inner form $G'_i$ of $\PGL_{n_i, F_i}$ is toric-friendly.
By Lemmas~\ref{lem:Weil-restriction} and~\ref{lem:product}
the product $G=\prod_i R_{F_i/k}G'_i$ is toric-friendly.
\qed
\end{subsec}

\begin{corollary} \label{cor.non-special}
Let $G$ be a nontrivial semisimple $k$-group.
Then there exist a field extension $K/k$ and
a maximal $K$-torus $T \subset G$ which is not special.
Equivalently, there exists a field extension $K/k$ and
a maximal $K$-torus $T$ of $G$ such that $H^1(K, T) \ne  1 $.
\end{corollary}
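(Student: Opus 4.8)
The plan is to argue by contradiction, using the Main Theorem. Suppose the conclusion fails, i.e., $H^1(K,T)=1$ for \emph{every} field extension $K/k$ and \emph{every} maximal $K$-torus $T$ of $G_K$. Then a fortiori the kernel of $H^1(K,T)\to H^1(K,G)$ is trivial for all such $K$ and $T$, so $G$ is toric-friendly, and Main Theorem~\ref{thm:non-split-intro} yields $G\cong\prod_i R_{F_i/k}G'_i$ with each $F_i/k$ finite separable and each $G'_i=\PGL_{1,A_i}$ for a central simple $F_i$-algebra $A_i$ of degree $n_i$. Since $G$ is nontrivial, some $n_i\ge 2$; after relabelling I may take $i=1$ and write $F=F_1$, $A=A_1$, $n=n_1\ge 2$, $H=G'_1=\PGL_{1,A}$.

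The next step is to produce a non-special maximal torus inside the single factor $H$. Choose a finite extension $M_0/F$ that splits $A$, so $H_{M_0}\cong\PGL_{n,M_0}$, and then imitate Example~\ref{ex:SLn} over $M_0$: let $x_1,\dots,x_n$ be independent variables, let $\Gamma=\Z/n\Z$ act on $L:=M_0(x_1,\dots,x_n)$ by cyclically permuting the $x_j$, and set $M:=L^\Gamma$. Then $L/M$ is cyclic of degree $n$, and the degree-map argument of Example~\ref{ex:SLn} shows that $N_{L/M}(L^*)\ne M^*$. Base change from $M_0$ to $M$ keeps $A$ split, so $H_M\cong\PGL_{n,M}=\GL_{n,M}/\G_{m,M}$. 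Via its left regular representation, $L$ embeds into $M_n(M)$ as a maximal commutative \'etale $M$-subalgebra, so $R_{L/M}\G_{m,L}$ is a maximal $M$-torus of $\GL_{n,M}$ and its image $\bar T:=R_{L/M}\G_{m,L}/\G_{m,M}$ is a maximal $M$-torus of $H_M$. From the cohomology sequence of $1\to\G_{m,M}\to R_{L/M}\G_{m,L}\to\bar T\to 1$, using Hilbert~90 and Shapiro's lemma (so that $H^1(M,R_{L/M}\G_{m,L})=H^1(L,\G_m)=0$ and $H^2(M,R_{L/M}\G_{m,L})=H^2(L,\G_m)$), one identifies $H^1(M,\bar T)$ with the relative Brauer group $\ker[\mathrm{Br}(M)\to\mathrm{Br}(L)]$, which for a cyclic extension of degree $n$ is $M^*/N_{L/M}(L^*)\ne 1$. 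Hence $\bar T$ is a non-special maximal $M$-torus of $H_M$.

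It remains to transport $\bar T$ to $G$. Viewing $M$ as an extension of $k$ through $k\subset F\subset M$, we have $F\otimes_k M\cong M\times E$ for some \'etale $M$-algebra $E$, where the factor $M$ corresponds to the given embedding $F\hookrightarrow M$; as in the proof of Lemma~\ref{lem:Weil-restriction}, this gives a decomposition $G_M\cong H_M\times G''$ with $G''$ a semisimple $M$-group (possibly trivial). Choosing any maximal $M$-torus $T''$ of $G''$, the product $T:=\bar T\times T''$ is a maximal $M$-torus of $G_M$, and $H^1(M,T)\cong H^1(M,\bar T)\times H^1(M,T'')$ is nontrivial because $H^1(M,\bar T)\ne 1$. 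This contradicts our hypothesis, proving the first assertion. The stated reformulation follows at once: if a maximal $K$-torus $T$ of $G_K$ is not special, then $H^1(K',T_{K'})\ne 1$ for some extension $K'/K$, and $K'$ is again an extension of $k$.

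I expect the main obstacle to be purely organizational: verifying that the extension $M/k$ constructed above genuinely exhibits $H_M$ as a direct factor of $G_M$, so that a non-special torus of $H_M$ yields one of $G_M$. The remaining ingredients are either a direct invocation of Main Theorem~\ref{thm:non-split-intro} or small variations on computations already in the paper (Example~\ref{ex:SLn}, Lemma~\ref{lem:L}, and the proof of Lemma~\ref{lem:Weil-restriction}).
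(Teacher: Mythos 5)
Your argument is correct, but it takes a genuinely different route from the paper's. Both proofs start the same way: assuming $H^1(K,T)=1$ for all $K/k$ and all maximal $K$-tori $T$, the group $G$ is trivially toric-friendly. At that point the paper reduces to the case of split $G$, invokes Steinberg's theorem (every class in $H^1(K,G)$ for quasi-split $G$ comes from a maximal torus) to conclude that $G$ itself would be special, then uses Grothendieck's classification of special semisimple groups to force $G$ to be simply connected of type $A$/$C$, and finally contradicts the Main Theorem, which allows no nontrivial simply connected toric-friendly group. You instead apply the Main Theorem directly to write $G\cong\prod_i R_{F_i/k}G'_i$ with $G'_i$ an inner form of $\PGL_{n_i,F_i}$ and some $n_i\ge 2$, and then exhibit an explicit non-special maximal torus: after splitting $A$ and forming the cyclic degree-$n$ extension $L/M$ from Example~\ref{ex:SLn}, the torus $\bar T=R_{L/M}\G_{m,L}/\G_{m,M}\subset\PGL_{n,M}$ has $H^1(M,\bar T)\cong\ker[\mathrm{Br}(M)\to\mathrm{Br}(L)]\cong M^*/N_{L/M}(L^*)\ne 1$, and the Weil-restriction decomposition of Lemma~\ref{lem:Weil-restriction} transports this torus into $G_M$ (since $F\subset M_0\subset M$, the factor $H_M$ really is a direct factor of $G_M$). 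Your computation of $H^1(M,\bar T)$ via Shapiro, Hilbert~90 and the cyclic relative Brauer group is correct and non-circular, since nothing you cite depends on this corollary. What the paper's route buys is brevity (two citations replace your torus construction); what yours buys is independence from Steinberg's Theorem~11.1 and from a second appeal to Grothendieck's classification, plus an explicit witness for the non-special torus rather than a purely indirect contradiction.
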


\begin{proof} Assume the contrary, that is,
that for any field extension $K/k$,
any maximal $K$-torus $T \subset G_K$ is special.
We may and shall assume that $G$ is split.
Recall that for a (quasi-)split group,
by a theorem of Steinberg~\cite[Theorem 11.1]{steinberg}
every element of $H^1(K, G)$ lies in the image of the map
$H^1(K, T) \to H^1(K, G)$ for some maximal $K$-torus
$T$ of $G$. Thus, under our assumption we have
$H^1(K, G) =  1 $ for every field extension $K/k$, that is, $G$ is special.
By a theorem of Grothendieck~\cite[Theorem 3]{Grothendieck} this is only
possible if $G$ is simply connected and has components only of types $A$ and $C$.
On the other hand, $G$ is clearly  toric-friendly
(see Definition~\ref{def:toric-friendly}),
and by Theorem~\ref{thm:non-split-intro}
no nontrivial simply connected semisimple group can be  toric-friendly, a contradiction.
\end{proof}

The following corollary follows immediately
from Theorem \ref{thm:non-split-intro} and Corollary \ref{cor.modR(G)}.

\begin{corollary}\label{cor:reductive-tf}
Let $G$ be a split reductive  $k$-group.
The group $G$ is  toric-friendly  if and only if it satisfies the following two
conditions:
\par (a) the center $Z(G)$ of $G$ is a $k$-torus, and
\par (b) the adjoint group $G\ad:=G/Z(G)$ is a direct product of
simple adjoint groups of type $A$. \qed
\end{corollary}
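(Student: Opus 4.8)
The plan is to deduce Corollary~\ref{cor:reductive-tf} directly from Theorem~\ref{thm:non-split-intro} together with Corollary~\ref{cor.modR(G)}, as the preamble sentence suggests. First I would prove the ``if'' direction: assume (a) and (b) hold. Since $G$ is split and $Z(G)$ is assumed to be a $k$-torus, the radical $\Rad(G)$ coincides with the connected center $Z(G)^\circ = Z(G)$ (condition (a) forces $Z(G)$ to be connected, being a torus), which is a split torus, hence quasi-trivial, hence special. By Corollary~\ref{cor.modR(G)}, $G$ is toric-friendly if and only if the semisimple group $G/\Rad(G) = G/Z(G) = G\ad$ is toric-friendly. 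By (b), $G\ad$ is a direct product of simple adjoint groups of type $A$ over $k$, each of which is $\PGL_{n_i,k}$; by Proposition~\ref{lem:GL(A)}(c) each $\PGL_{n_i,k}$ is toric-friendly, and by Lemma~\ref{lem:product} their product is toric-friendly. Hence $G$ is toric-friendly.

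For the ``only if'' direction, assume $G$ is toric-friendly. To get (a), I would note that $Z(G)$ is a $k$-group of multiplicative type, so it suffices to rule out torsion; equivalently it suffices to show $G\sss = [G,G]$ is simply connected cannot fail in the relevant way — more directly, one argues via the structure of toric-friendly semisimple groups. The cleanest route: the semisimple part $G\sss$ of $G$ need not itself be toric-friendly in general, but here is the key point. Since $G$ is split, $\Rad(G)$ is a split torus, hence special, so by Corollary~\ref{cor.modR(G)} the semisimple group $G/\Rad(G) = G\ad$ is toric-friendly. By the Main Theorem applied over $k$ (note $G\ad$ is split, so the $F_i$ are all $k$ and the inner forms are split), $G\ad$ is a direct product $\prod_i \PGL_{n_i,k}$ of simple adjoint groups of type $A$; this is exactly (b). For (a), suppose $Z(G)$ were not a torus. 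Then $Z(G)$ contains a nontrivial finite subgroup scheme, which means the map $G \to G\ad$ has a kernel containing torsion; equivalently $G\sss \to G\ad$ is not an isomorphism onto a factor, i.e.\ $G\sss$ is not adjoint. But $G\sss$, being a direct factor up to isogeny of a toric-friendly... — here I need to be careful, since being a subquotient does not transfer toric-friendliness. Instead I would argue as in the proof of Proposition~\ref{prop.typeA}: $G$ itself contains a subgroup $(\SL_{n_1}\times\cdots\times\SL_{n_r})/C$ (the derived group) mapping to the torus factor trivially, and the obstruction computation there, or rather Proposition~\ref{prop.nontrivial}/Lemma~\ref{lem:again-CT}, shows that if $C \neq \mu$ then there is a maximal torus $T$ of $G$ and a nontrivial class in $\ker[H^1(K,T)\to H^1(K,G)]$, contradicting toric-friendliness. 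Since $Z(G)$ being a torus is equivalent to $G\sss$ being adjoint, which (for type $A$ factors) is equivalent to $C = \mu$, this gives (a).

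The main obstacle I anticipate is making precise the reduction in the ``only if'' direction that $Z(G)$ not being a torus forces the relevant central subgroup $C$ of $\prod\SL_{n_i}$ to be proper in $\prod\mu_{n_i}$, and then invoking Proposition~\ref{prop.typeA} (or its proof) directly for the reductive group $G$ rather than for a semisimple quotient. The point is that the argument of Proposition~\ref{prop.typeA} uses only a maximal torus of $G\sc$ pushed into $G$ via $f\colon G\sc \to G$ of \ref{subsec:T-T-sc}, and Proposition~\ref{prop.nontrivial} is stated for reductive $G$, so it applies verbatim: if the kernel of $G\sc \to G$ is a proper subgroup of the full center $\mu$ of $G\sc = \prod \SL_{n_i}$ (which is exactly the condition that $G\sss$ is non-adjoint, equivalently $Z(G)$ has torsion), then the explicit valuation-type obstruction constructed in the proof of Proposition~\ref{prop.typeA} produces a nontrivial element of $\ker[H^1(K,T)\to H^1(K,T)]$... more precisely a nontrivial class showing $H^1(K,T\sc)\to H^1(K,T)$ is nonzero, so $G$ is not toric-friendly by Proposition~\ref{prop.nontrivial}.

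Thus the corollary reduces entirely to: (i) Corollary~\ref{cor.modR(G)} to pass between $G$ and $G\ad$ using that $\Rad(G)$ is split hence special; (ii) the Main Theorem to identify $G\ad$; and (iii) Proposition~\ref{prop.typeA} (via Proposition~\ref{prop.nontrivial}) to force $Z(G)$ to be a torus. No essentially new argument is needed, which is why the statement is flagged as following ``immediately.''
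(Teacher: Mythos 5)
Your ``if'' direction is correct and is exactly the paper's intended deduction: $\Rad(G)$ is a subtorus of a split maximal torus, hence split, hence special, so Corollary~\ref{cor.modR(G)} reduces the question to the split semisimple group $G/\Rad(G)$, which is handled by Theorem~\ref{thm:non-split-intro}. The ``only if'' direction, however, contains a genuine gap in your treatment of (a). The asserted equivalence ``$Z(G)$ is a torus $\iff$ $G\sss$ is adjoint $\iff$ $C=\mu$'' is false: for $G=\GL_{n,k}$ one has $C=\Ker(\SL_n\to\GL_n)=1\neq\mu_n$ and $G\sss=\SL_n$ is not adjoint, yet $Z(\GL_n)=\Gm$ is a torus and $\GL_n$ \emph{is} toric-friendly by Proposition~\ref{lem:GL(A)}(a). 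The same example shows that Proposition~\ref{prop.typeA} does not apply ``verbatim'' to reductive $G$: when $\Rad(G)\neq 1$ the maximal torus of $G$ is $T=f(T\sc)\cdot\Rad(G)_K$ (see \ref{subsec:T-T-sc}), not $T\sc/C$, and the non-surjectivity of $H^1(K,C)\to H^1(K,T\sc)$ established in the proof of Proposition~\ref{prop.typeA} says nothing about triviality of $H^1(K,T\sc)\to H^1(K,T)$ --- for $\GL_n$ the latter map is trivial simply because $H^1(K,T)=1$. So the implication ``$C\neq\mu$ implies $G$ is not toric-friendly,'' on which your step (iii) rests, fails for reductive groups.

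The fix is short and is already implicit in what you wrote: applying the Main Theorem to the split, semisimple, toric-friendly group $G/\Rad(G)$ yields $G/\Rad(G)\simeq\prod_i\PGL_{n_i,k}$, so in particular $G/\Rad(G)$ is \emph{adjoint}. Since $Z(G)/\Rad(G)=Z(G/\Rad(G))=1$, we get $Z(G)=\Rad(G)$, which is a torus --- that is (a) --- and then $G\ad=G/Z(G)=G/\Rad(G)$ gives (b). No appeal to Proposition~\ref{prop.typeA} is needed, and you should avoid writing $G/\Rad(G)=G\ad$ before (a) is established, since that identity is precisely what (a) asserts.
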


Note that in condition (a) we allow the trivial $k$-torus $\{ 1 \}$.
\medskip

By Corollary \ref{cor.modR(G)} if $G$ is a reductive  $k$-group
such that   $G/R(G)$ is  toric-friendly  and $R(G)$ is special, then $G$ is  toric-friendly.
The example below shows that when $G/R(G)$ is  toric-friendly  but  $R(G)$ is not special,
$G$ need not be  toric-friendly.

\def\U{{\rm U}}
\begin{example}\label{ex:non-special}
Let $k=\R$, $G=\U_2$,  the unitary group in 2 complex variables.
Then $Z(G)$ is the group of scalar matrices in $G$, it is connected,
hence $R(G)=Z(G)$ and $G/R(G)=G\ad=\PSU_2$.
Since $\PSU_2$ is an inner form of $\PGL_{2,\R}$,
by Theorem \ref{thm:non-split-intro} it is  toric-friendly.
However, the group $G=\U_2$ is not  toric-friendly.
This does not contradict to Corollary \ref{cor.modR(G)},
because $R(G)=Z(G)$ is not special: $H^1(\R,Z(G))=\R^*/N_{\C/\R}(\C^*)\simeq\Z/2\Z$.
\end{example}

\begin{proof}
We prove that $G=\U_2$ is not  toric-friendly.
Set $S=R^1_{\C/\R}\Gm$.
Let $T$ be the diagonal maximal $\R$-torus of $\U_2$.
Set $G\sc=\SU_2$, $T\sc=T\cap \SU_2$, then $T\sc\simeq S$.

Let $a\sc\in H^1(\R,T\sc)$ be the cohomology class of the cocycle
given by the element  $-1\in T\sc(\R)$ of order 2.
Let $a\in H^1(\R,T)$ be  the image of $a\sc$ in $H^1(\R,T)$.
Clearly  $a\neq 1$.
By Proposition \ref{prop.nontrivial} $G$ is not toric-friendly.
\end{proof}

\end{document}